\documentclass[11.5pt]{amsart}

\usepackage{amsmath,amsthm,amssymb,latexsym,amscd}
\usepackage{setspace}
\usepackage[dvips]{graphicx}
\usepackage{longtable} 
\usepackage{float}
\usepackage{subcaption}
\usepackage{pstricks}  
\usepackage{pst-node}  
\usepackage{scalefnt}  
\usepackage{enumitem}
\usepackage{mdwlist}
\usepackage{tikz}
\usepackage{bm}
\usepackage{thmtools}
\usepackage{thm-restate}
\usepackage{cleveref}
\usepackage[mathscr]{euscript}
\usepackage[parfill]{parskip}
\usepackage[toc,page]{appendix}

\usepackage{algorithmf}
\usepackage{algpseudocode}
\usepackage{algorithmicx}
\usepackage{algorithm}

\algnewcommand\And{\textbf{and}}

\DeclareMathAlphabet{\mathpzc}{OT1}{pzc}{m}{it}

\usetikzlibrary{arrows,scopes, shapes.geometric, calc, shapes, decorations.markings}
\tikzstyle{vertex}=[circle, draw, fill, inner sep=0pt, minimum size=2pt]

\usepackage[margin=1.25in]{geometry}

\theoremstyle{plain}
\newtheorem{theorem}{Theorem}[section]
\newtheorem{lemma}[theorem]{Lemma}
\newtheorem{corollary}[theorem]{Corollary}

\newtheorem{conjecture}[theorem]{Conjecture}

\theoremstyle{definition}
\newtheorem{definition}[theorem]{Definition}

\newtheorem{observation}[theorem]{Observation}

\theoremstyle{remark}





\numberwithin{equation}{theorem}

\title{A Highly Symmetric Hamilton Decomposition for Hypercubes}

\author[F. Bouya]{Farid Bouya}
\address{Department of Mathematics, Louisiana State University}
\email{fbouya1@lsu.edu}
\author[E. S. Mahmoodian]{Ebadollah S. Mahmoodian}
\address{Department of Mathematics, Sharif University of Technology}
\email{emahmood@sharif.ir}
\author[M. Shokrian]{Modjtaba Shokrian Zini}
\address{Department of Mathematics, University of California, Santa Barbara}
\email{shokrian@math.ucsb.edu}
\author[M. Tefagh]{Mojtaba Tefagh}
\address{Department of Electrical Engineering, Stanford University}
\email{mtefagh@stanford.edu}

\date{\today}

\def\i4c{internally $4$-connected}

\def\int{\mathrm{int}}

\def\le{\leqslant}

\def\i4c{internally $4$-connected}

\def\'{$'$}

\def\i4c{internally 4-connected}

\begin{document}

\begin{abstract}
A Hamilton decomposition of a graph is a partitioning of its edge set into disjoint spanning cycles.
The existence of such decompositions is known for all hypercubes of even dimension $2n$.
We give a decomposition for the case $n = 2^a3^b$ that is highly symmetric in the sense that every cycle can be derived from every other cycle just by permuting the axes.
We conjecture that a similar decomposition exists for every $n$.

{\it{ \bf keywords}}: Hamilton decomposition, Hypercubes
\end{abstract}

\maketitle

\section{Introduction}



Hypercubes are widely used in computer architectures in areas like parallel computing~\cite{ost}, multiprocessor systems~\cite{das}, processor allocation~\cite{rai}, and fault-tolerant computing~\cite{abd}.
Hamilton decomposition (H.D.) of hypercubes is of central importance in the aforementioned areas.

In 1954, Ringel showed that the hypercube $Q_n$ is Hamilton decomposable whenever $n$ is a power of two and posed the problem of whether a similar decomposition exists for all even~$n$~\cite{ram}.
In 1982, Aubert and Schneider showed that every $Q_{2n}$ admits a Hamilton decomposition~\cite{ram}.
Many different algorithms and methods have been used to find explicit Hamilton decompositions for $Q_{2n}$.
Our work is inspired by two such methods.
Okuda and Song~\cite{Okuda} gave a direct approach for finding Hamilton decompositions for $Q_{2n}$ with $n \le 4$.
Mollard and Ramras~\cite{ram} gave a fast and efficient method of generating and storing Hamilton decompositions when $n$ is a power of two by constructing one special cycle and permuting the axes to obtain the other cycles.
We use Okuda and Song's method to continue the work of Mollard and Ramras and extend it to all $n$ of the form $2^a 3^b$, which is the main result of this paper, stated formally as Corollary~\ref{c5}.
In Appendix~\ref{ap2}, we present Algorithms~\ref{a4} and~\ref{a5} that efficiently construct such decompositions.
We conjecture that a similar decomposition exists for every~$n$.


\section{Notations}

The \emph{hypercube of dimension $n$}, denoted by $Q_n$, is the graph whose vertices are the $2^n$ binary strings of length $n$ and two vertices are adjacent if and only if their corresponding strings differ in exactly one bit.
The \emph{Cartesian product} of two graphs $G$ and $H$, denoted by $G \Box H$, has vertex set
\begin{align*}
V \left( G \Box H \right) = \left\{ (u,v) \vert u \in V(G) \text{ and } v \in V(H) \right\}
\end{align*}
and two vertices $(u,v)$ and $(u',v')$ are connected if and only if
\begin{itemize}
    \item $u = u'$ and $vv' \in E(H)$, or
    \item $v = v'$ and $uu' \in E(G)$.
\end{itemize}

Using this definition, it is not hard to see that
\begin{align*}
& Q_{m+n} = Q_m \Box Q_n, \\
& Q_n = \underbrace{ K_2 \Box K_2 \Box \cdots \Box K_2}_{n \text{ times}}, \\
\end{align*}
and
\begin{align}
\label{f1}
& Q_{2n} = \underbrace{C_4 \Box C_4 \Box \cdots \Box C_4}_{n \text{ times}}.
\end{align}

As in~\cite{tow}, we use Equation (\ref{f1}) to make another coordinate system for the vertices of $Q_{2n}$: \\
Each vertex is assigned a quaternary string  $q_1 q_2 \ldots q_n$ of length $n$, where $q_i \in \{ 0,1,2,3 \}$.
There is an edge between two vertices if and only if their labels differ in exactly one position, and in that position, their difference is either $1$ or $-1$ modulo $4$.
We wish to consider directed cycles, so we assign directions to edges of $Q_{2n}$ as follows:
A \emph{dimension-$k$ edge} in $Q_{2n}$ is an edge that connects two vertices whose quaternary labels differ in the {\it$k$th} digit.
If a dimension-$k$ edge is directed in the \emph{positive direction}, that is, it is directed from $(x_1,x_2, \ldots ,x_{k-1}, x_k, x_{k+1} , \ldots, x_n) $ towards $(x_1,x_2, \ldots ,x_{k-1}, x_k +1 \pmod{4}, x_{k+1} , \ldots, x_n) $, we show it by $k$, and if it is directed in the opposite direction, we show it by $\overline{k}$.
A \emph{Hamilton cycle} in a graph is a cycle visiting all the vertices.
A \emph{Hamilton decomposition} of $Q_{2n}$ is a partitioning of its edge set into $n$ disjoint Hamilton cycles.
We use the notation given in~\cite{Okuda} to show directed cycles:
We start from the initial vertex, and simply move in the positive direction of $C$, writing down the dimension and the direction of the edges we pass.
For example, the cycle given in Figure~\ref{p1}, with the origin (top left vertex) as its initial vertex, is shown by $2\overline{11}22\overline{1112}11\overline{22}111$.\\
\begin{figure}[!hbtp]
\centering

\begin{tikzpicture}
    \foreach \i in {0,...,3} {
        \draw [dotted ,gray] (\i,0) -- (\i,3) 
        ;
            \foreach \j in {0,...,3} {
        \node[vertex] at (\i,\j) {};
}
            
    }
    \foreach \i in {0,...,3} {
        \draw [dotted,gray] (0,\i) -- (3,\i) 
        ;
    }
    
\draw[->,ultra thick] (-1,4)--(0,4) node[right]{$1$};
\draw[->,ultra thick] (-1,4)--(-1,3) node[below]{$2$};

\begin{scope}[very thick,decoration={
    markings,
    mark=at position 0.5 with {\arrow{>}}}
    ] 
    \draw[postaction={decorate}] (0,3)--(0,2);
    \draw[postaction={decorate}] (3,2)--(2,2);
    \draw[postaction={decorate}] (2,2)--(2,0);
    \draw[postaction={decorate}] (2,0)--(0,0);
    \draw[postaction={decorate}] (3,0)--(3,1);
    \draw[postaction={decorate}] (0,1)--(1,1);
    \draw[postaction={decorate}] (1,1)--(1,3);
    \draw[postaction={decorate}] (1,3)--(3,3);
    \draw[postaction={decorate}] (3, 3) .. controls (1.5,3.5) .. (0, 3);
    \draw[postaction={decorate}] (0, 2) .. controls (1.5,2.5) .. (3, 2);
    \draw[postaction={decorate}] (0, 0) .. controls (1.5,.5) .. (3, 0);
    \draw[postaction={decorate}] (3, 1) .. controls (1.5,1.5) .. (0, 1);
\end{scope}

        \node[vertex, fill, minimum size = 4pt] at (0,3) {};

\end{tikzpicture}

\caption{ \footnotesize The cycle in $Q_4$ matching the code $2\overline{11}22\overline{1112}11\overline{22}111$.}\label{p1}
\end{figure}
In this paper, we only deal with Hamilton cycles, so the initial vertex is always taken to be the origin, that is, the vertex $\mathbf{0} =  (0,0, \ldots ,0)$.

\begin{definition}
Define $G_{n,k}$ to be the graph $ \underbrace{C_{4^n} \Box C_{4^n} \Box \cdots \Box C_{4^n}}_{k \text{ times}}$.
\end{definition}
Note that $Q_{2k} \cong G_{1,k}$.
We show directed edges and cycles in $G_{n,k}$ the same way we show them in $Q_{2n}$.
The only difference is that coordinates in $G_{n,k}$ are calculated modulo $4^n$, whereas they are calculated modulo 4 in $Q_{2n}$.
We are especially interested in the cases $k = 2$ and $k = 3$, so we recognize that these cases require special treatment.
Since $G_{n,2}$ is the Cartesian product of two cycles, we think of $G_{n,2}$ as a 2-dimensional cyclic grid.
Every vertex of $C_{4^n} \Box C_{4^n}$ has coordinates $(u,v)$, where $u$ is in the first copy of $C_{4^n}$ and $v$ is in the second copy.
We think of this coordinate $(u,v)$ in two ways:
\begin{enumerate}
\item The vertices $u$ and $v$ are elements of $Q_{2n}$, and thus quaternary strings of length $n$.
Therefore, $(u,v)$ is a quaternary string of length $2n$.
\item Fixing some order in $Q_{2n}$, we assign the integers $0$ to $4^n-1$ to its vertices.
Thus, every vertex in $Q_{4n}$ has integral coordinates $(u,v)$ where $0 \le u,v \le 4^n-1$.
\end{enumerate}

In order to derive Hamilton decompositions for larger hypercubes from smaller hypercubes, we study the graphs $G_{n,2}$ and $G_{n,3}$ in more detail.

\section{The 2-Dimensional Case}
We start by finding an H.D. for $G_{n,2}$.
We will see how an H.D. for $G_{n,2}$ and an H.D. for $Q_{2n}$ can be combined to give an H.D. for $Q_{4n}$.

\subsection{An H.D. for $G_{n,2}$}\hspace*{\fill} \label{sec31}

There is an H.D. for the graph $C_m \Box C_m$ in
\begin{align*}
& H_1 = \underbrace{\underbrace{1 1 \cdots 1}_{m-1 \text{ times}} 2 \underbrace{1 1 \cdots 1}_{m-1 \text{ times}} 2 \cdots \underbrace{1 1 \cdots 1}_{m-1 \text{ times}} 2}_{m \text{ times}} \hspace{10pt} , \hspace{10pt}
H_2 = \underbrace{\underbrace{2 2 \cdots 2}_{m-1 \text{ times}} 1 \underbrace{2 2 \cdots 2}_{m-1 \text{ times}} 1 \cdots \underbrace{2 2 \cdots 2}_{m-1 \text{ times}} 1}_{m \text{ times}}
\end{align*}
Since $G_{n,2} = C_{4^n} \Box C_{4^n}$, we have the same type of H.D. for $G_{n,2}$:
\begin{align}
\label{f2}
& H_1 = \underbrace{\underbrace{1 1 \cdots 1}_{4^n-1 \text{ times}} 2 \underbrace{1 1 \cdots 1}_{4^n-1 \text{ times}} 2 \cdots \underbrace{1 1 \cdots 1}_{4^n-1 \text{ times}} 2}_{4^n \text{ times}} \hspace{10pt} , \hspace{10pt}
H_2 = \underbrace{\underbrace{2 2 \cdots 2}_{4^n-1 \text{ times}} 1 \underbrace{2 2 \cdots 2}_{4^n-1 \text{ times}} 1 \cdots \underbrace{2 2 \cdots 2}_{4^n-1 \text{ times}} 1}_{4^n \text{ times}}
\end{align}

\subsection{Deriving an H.D. for $Q_{4n}$ From an H.D. for $Q_{2n}$}\hspace*{\fill}

Noting that $Q_{2n}$ has order $4^n$ and $Q_{4n} = Q_{2n} \Box Q_{2n}$, we propose the following:
\begin{definition}
Let $E$ be a directed Hamilton cycle in $Q_{2n}$.
A \emph{2-dimensional seating} of $Q_{4n}$ onto $G_{n,2}$ via $E$, is a representation of the vertices of $Q_{4n}$ by assigning them integral coordinates as follows:
\begin{enumerate}
    \item Consider $E$ and its positive direction.
    Take $\bm{0}$ as the initial vertex.
    Assign $0$ to $\bm{0}$, assign $1$ to the next vertex in $E$, and continue until $4^n-1$ is assigned to the last vertex of $E$.
    \item Induce the order of $E$ onto $Q_{2n}$, so that each vertex has the same order in either graph. \label{d22}
    \item Using the coordinates in (\ref{d22}), assign coordinates to every member of $Q_{4n} = Q_{2n} \Box Q_{2n}$.
    Put the vertices on the 2-dimensional grid using their coordinates.
\end{enumerate}
Using the natural order of $E$, we have mapped the vertices of $Q_{4n}$ onto $G_{n,2}$ and recognized $Q_{4n}$ as a supergraph of $G_{n,2}$.
Any subgraph of $G_{n,2}$, therefore, is also a subgraph of $Q_{4n}$.
In particular, if $H$ is a directed Hamilton cycle in $G_{n,2}$, the \emph{2-dimensional directed Hamilton cycle derived from $E$ and $H$}, denoted by $f(E,H)$, is a Hamilton cycle in $Q_{4n}$ and is defined in the natural way:
\begin{enumerate}
\item 2-dimensionally seat $Q_{4n}$ onto $G_{n,2}$ via $E$.
\item $Q_{4n}$ has $2n$ axes $1,2, \ldots , 2n$, while $G_{n,2}$ has an $x$-axis and a $y$-axis.
The axes $1,2, \ldots , n$ are in direction $x$ and the axes $n+1,n+2, \ldots , 2n$ are in direction $y$.
\item $f(E,H)$ has the same edges in the supergraph $Q_{2n} \Box Q_{2n}$ as $H$ has in the subgraph $G_{n,2}$.
\end{enumerate}

\end{definition}

The following lemma is very useful.
\begin{lemma}
\label{l1}
Let $H_1$ and $H_2$ be two disjoint Hamilton cycles in  $G_{n,2}$ (which form an H.D.) and $E_1$ and $E_2$ be two disjoint Hamilton cycles in $Q_{2n}$. Then the four Hamilton cycles $F_1 = f(E_1 , H_1)$, $F_2 = f(E_1,H_2)$, $F_3 = f(E_2 , H_1)$, and $F_4 = f(E_2 , H_2)$ in $Q_{4n}$ are pairwise disjoint.
\end{lemma}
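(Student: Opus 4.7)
My plan is to partition the edges of $Q_{4n} = Q_{2n} \Box Q_{2n}$ into two types according to which factor they move in: \emph{horizontal} edges $\{(u,v),(u',v)\}$ with $uu' \in E(Q_{2n})$, and \emph{vertical} edges $\{(u,v),(u,v')\}$ with $vv' \in E(Q_{2n})$. Every edge of $Q_{4n}$ is of exactly one type, so a horizontal edge can never coincide with a vertical edge; this already reduces the problem to comparing edges of the same type across the four cycles $F_1, F_2, F_3, F_4$.

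The key observation I would establish next is how the lift $f(E,H)$ interacts with this dichotomy. Under the $2$-dimensional seating via $E$, the vertex at grid position $(i,j)$ is $(E(i), E(j))$, where $E(i)$ denotes the $i$-th vertex visited by $E$ starting from $\mathbf{0}$. A horizontal grid edge $\{(i,j),(i+1,j)\}$ of $H$ therefore lifts to the horizontal $Q_{4n}$-edge $\{(E(i),E(j)),(E(i+1),E(j))\}$, whose first-coordinate change $\{E(i),E(i+1)\}$ is by construction an edge of $E$; symmetrically for vertical edges. Hence the first-coordinate change of any horizontal edge of $f(E,H)$ is an edge of $E$, and the second-coordinate change of any vertical edge of $f(E,H)$ is an edge of $E$.

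With this in hand I would check the six pairs, splitting on whether the two cycles share the seating cycle. For $(F_1,F_2)$ and $(F_3,F_4)$ the seating cycle is common, so a shared horizontal edge would force the corresponding horizontal grid edges of $H_1$ and $H_2$ to coincide, contradicting disjointness of $H_1$ and $H_2$ in $G_{n,2}$; the vertical side is identical. For $(F_1,F_3)$, $(F_1,F_4)$, $(F_2,F_3)$, and $(F_2,F_4)$ the seating cycles differ, so a shared horizontal edge would produce an edge of $Q_{2n}$ lying in both $E_1$ and $E_2$, contradicting disjointness of $E_1$ and $E_2$; the vertical case is again symmetric.

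The content is essentially bookkeeping once the horizontal/vertical split is in place. The one point that needs care is the observation that the first-coordinate change of a lifted horizontal edge is an edge of the seating cycle $E$ itself, not merely some edge of $Q_{2n}$: this is precisely what allows disjointness of $E_1$ and $E_2$ to transfer to disjointness of the lifted cycles, and it is the one place where the argument might have gone wrong had the seating been defined less carefully.
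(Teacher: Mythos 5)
Your proof is correct and follows essentially the same route as the paper: the paper seats $Q_{4n}$ onto $G_{n,2}$ via $E_1$ and observes that $F_1,F_2$ lie ``on the grid'' while $F_3,F_4$ lie ``off the grid,'' which is exactly your observation that the coordinate change of a lifted horizontal (resp.\ vertical) edge is an edge of the seating cycle itself. Your write-up just makes the horizontal/vertical dichotomy and the injectivity of the lift explicit, and checks all six pairs rather than appealing to symmetry.
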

\begin{proof}
It suffices to show that $F_1 = f(E_1, H_1)$ is disjoint from the other three cycles $F_2$, $F_3$, and $F_4$.
To achieve this, we 2-dimensionally seat $Q_{4n}$ onto $G_{n,2}$ via $E_1$.
This enables us to see that $F_1$ and $F_2$ have all their edges on the grid, whereas $F_3$ and $F_4$ have all their edges off the grid.
This means that $F_1$ is disjoint from $F_3$ and from $F_4$.
Furthermore, $F_1$ and $F_2$ represent $H_1$ and $H_2$, respectively, and $H_1$ and $H_2$ are disjoint, so $F_1$ and $F_2$ must be disjoint as well.
\end{proof}
This provides us with a recursive tool to construct Hamilton decompositions.
\begin{corollary}
\label{c1}
If $\{ H_1 , H_2 \} $ is an H.D. for $G_{n,2}$ and $\{ E_1 , E_2 , \ldots , E_{n}  \} $ is an H.D. for $Q_{2n}$, then the family $\{ f(E_i , H_j) \mid 1 \le i \le n, 1 \le j \le 2 \}$ is an H.D. for $Q_{4n}$.
The new Hamilton cycles are named $F_1$, $F_2$, \ldots, $F_{2n}$ via $F_{j} = f(E_j,H_1)$ and $F_{j+n} = f(E_j,H_2)$ for $1 \le j \le n$.
\end{corollary}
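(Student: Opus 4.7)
The plan is to reduce the general statement to Lemma~\ref{l1} and a simple edge-count.  First I would note that the family $\{f(E_i,H_j)\}$ has exactly $2n$ members, which is the correct number of cycles in an H.D.\ of $Q_{4n}$ (since $Q_{4n}$ is $4n$-regular, an H.D.\ consists of $2n$ Hamilton cycles). Because the total number of edges of $Q_{4n}$ equals $2n \cdot 2^{4n}$, which is exactly $2n$ times the length of a single Hamilton cycle, it will suffice to show that the $2n$ cycles in the family are pairwise edge-disjoint; the covering property then follows automatically.

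Next, I would verify pairwise disjointness by considering two arbitrary cycles $f(E_i,H_j)$ and $f(E_{i'},H_{j'})$ with $(i,j)\neq(i',j')$, and splitting into two cases. In the case $i\neq i'$, I would apply Lemma~\ref{l1} directly to the pair $E_i,E_{i'}$ together with $H_1,H_2$: the lemma asserts that all four cycles $f(E_i,H_1)$, $f(E_i,H_2)$, $f(E_{i'},H_1)$, $f(E_{i'},H_2)$ are pairwise disjoint, and our two cycles are among these four. In the case $i=i'$ but $j\neq j'$, I would $2$-dimensionally seat $Q_{4n}$ onto $G_{n,2}$ via $E_i$; then $f(E_i,H_j)$ and $f(E_i,H_{j'})$ have edges exactly matching $H_j$ and $H_{j'}$ on the grid, and since $\{H_1,H_2\}$ is an H.D.\ of $G_{n,2}$ these two cycles are disjoint.

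Combining the two cases, every pair of cycles in the family is disjoint. The edge-count observation above then turns pairwise disjointness into a full Hamilton decomposition, completing the proof.

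I do not anticipate any real obstacle here: essentially all the combinatorial content is already packaged inside Lemma~\ref{l1}, so the argument is a bookkeeping exercise extending the lemma from the pair $\{E_1,E_2\}$ to the full family $\{E_1,\dots,E_n\}$, together with the trivial edge-count check.
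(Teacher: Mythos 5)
Your proposal is correct and matches the paper's intent: the paper states Corollary~\ref{c1} without proof as an immediate consequence of Lemma~\ref{l1}, and your argument (pairwise disjointness from the lemma, plus the edge count $|E(Q_{4n})| = 2n\cdot 2^{4n} = 2n$ times the length of a Hamilton cycle) is exactly the bookkeeping the paper leaves implicit. Your separate treatment of the case $i=i'$ via the seating argument is a sensible touch, since Lemma~\ref{l1} as stated requires two distinct cycles $E_1, E_2$.
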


\subsection{2-Dimensional Algorithm}\hspace*{\fill}

Using the definition of $f(E,H)$, it is not difficult to devise an algorithm for computing an H.D. for $Q_{4n}$.
Algorithm~\ref{a1}, given in Appendix~\ref{ap2}, takes an H.D. for $G_{n,2}$ and an $H.D.$ for $Q_{2n}$ as inputs, and outputs an H.D. for $Q_{4n}$.

\section{The 3-Dimensional Case}
Just like in the 2-dimensional case, finding an H.D. for the graph $G_{n,3}$ is essential for transitioning from $Q_{2n}$ to $Q_{6n}$.
An H.D. for $Q_{2n}$ can be combined with an H.D. for $G_{n,3}$ to give an H.D. for $Q_{6n}$.


\subsection{An H.D. for $G_{n,3}$}\hspace*{\fill} \label{sec41}

Compared to the 2-dimensional case, finding an H.D. for $G_{n,3}$ is not easy. Motivated by~\cite{Okuda} and~\cite{tow}, we decompose the graph into three 2-factors, and then try to merge the components until we have three Hamilton cycles.
\begin{lemma}
\label{l2}
The graph $G_{n,3}$ with the partitioning given below decomposes into $3 \times 4^{n}$ copies of the directed cycle with $4^{2n}$ edges:

If $e$ is in direction 1 and is between $(x,y,z)$ and $(x+1,y,z)$, we direct $e$ from $(x,y,z)$ to $(x+1,y,z)$ and
\begin{align*}
& e \in Z \qquad \text{if } x+y+z = -1 \pmod{4^n}, \\
& e \in X \qquad \text{otherwise.}
\end{align*}
If $e$ is in direction 2 and is between $(x,y,z)$ and $(x,y+1,z)$, we direct $e$ from $(x,y,z)$ to $(x,y+1,z)$ and
\begin{align*}
& e \in X \qquad \text{if } x+y+z = -1 \pmod{4^n}, \\
& e \in Y \qquad \text{otherwise.}
\end{align*}
If $e$ is in direction 3 and is between $(x,y,z)$ and $(x,y,z+1)$, we direct $e$ from $(x,y,z)$ to $(x,y,z+1)$ and
\begin{align*}
& e \in Y \qquad \text{if } x+y+z = -1 \pmod{4^n}, \\
& e \in Z \qquad \text{otherwise.}
\end{align*}
\end{lemma}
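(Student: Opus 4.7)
The plan is to verify the lemma by checking that each of $X$, $Y$, $Z$ is a directed $2$-factor of $G_{n,3}$ (so each is a disjoint union of directed cycles) and that each cycle in the decomposition has length exactly $4^{2n}$. The rules defining $X$, $Y$, $Z$ are invariant under the cyclic relabeling $(x,y,z) \to (y,z,x)$ paired with $(X,Y,Z) \to (Y,Z,X)$, so it will suffice to do this analysis for $X$ and invoke symmetry for the other two sets.

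To see that $X$ is a $2$-factor, I would fix an arbitrary vertex $(x,y,z)$ and enumerate the four candidate $X$-edges incident to it (the direction-$1$ and direction-$2$ edges, in and out), splitting on the value of $x+y+z \pmod{4^n}$. The rules are arranged so that among the out-edges exactly one lies in $X$: the direction-$1$ edge to $(x+1,y,z)$ if $x+y+z \not\equiv -1$, else the direction-$2$ edge to $(x,y+1,z)$. An analogous dichotomy, governed by whether $x+y+z \equiv 0 \pmod{4^n}$, handles the in-edges. This reduces to a small case check.

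The main content is the cycle-length calculation. Since $X$ contains no direction-$3$ edges, every cycle of $X$ lies in a single plane $z = z_0$, and there are $4^n$ such planes, each with $4^{2n}$ vertices. The key observation is that every edge of $X$ increases $x+y+z \pmod{4^n}$ by exactly $1$, so the residue of this sum is a monovariant along any cycle, forcing the cycle length to be a multiple of $4^n$. In any window of $4^n$ consecutive edges, the residue $-1$ is achieved exactly once, so exactly one of those edges is a direction-$2$ ``shift'' (increasing $y$ by $1$) while the other $4^n-1$ are direction-$1$ steps (increasing $x$ by $1$); the net change of $(x,y)$ over such a window is $(-1, 1) \pmod{4^n}$. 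Iterating, the cycle closes only after $4^n$ windows, i.e. $4^{2n}$ steps. Since this equals the total number of vertices in the plane, there is exactly one cycle per plane, giving $4^n$ cycles in $X$ and $3 \cdot 4^n$ cycles overall.

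The $2$-factor enumeration is mostly bookkeeping; the real obstacle is spotting the right monovariant $x+y+z \pmod{4^n}$, which simultaneously forces cycle lengths to be multiples of $4^n$, pins down the unique direction-$2$ shift per block of $4^n$ edges, and yields the precise period needed to close the cycle.
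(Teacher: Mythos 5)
Your proposal is correct and follows the same route as the paper: verify that each vertex has a unique in-edge and a unique out-edge in $X$, trace the resulting cycle to see it has length $4^{2n}$, and then count $4^{3n}/4^{2n} = 4^n$ cycles in each of $X$, $Y$, $Z$. The paper's proof simply asserts that "moving along the edges of $X$" yields a cycle of length $4^{2n}$; your monovariant $x+y+z \pmod{4^n}$ and the window argument (one direction-$2$ step per $4^n$ edges, net displacement $(-1,1)$, closure only after $4^n$ windows) supply exactly the computation the paper omits.
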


We have demonstrated the case $n=1$ in Appendix~\ref{ap1}.

\begin{proof}
Choosing an arbitrary vertex $v$ and moving along the edges of $X$, we can see that $v$ belongs to a unique cycle of length $4^{2n}$ that is in $X$. Similarly, it belongs to a unique cycle of length $4^{2n}$ in $Y$ and another one in $Z$.
There are $4^{3n}$ vertices in total, so there are $4^n$ cycles in each of $X$, $Y$, and $Z$, for a total of $3 \times 4^n$ cycles.
\end{proof}

We wish to merge these cycles together and end up with just three, so that we have an H.D. for $G_{n,3}$. To this end, we introduce two cubes and a merge operation. These cubes and the merge operation were first introduced in~\cite{tow} and later in~\cite{Okuda} to build an H.D. for $Q_6$. We use them to construct an H.D. for every $G_{n,3}$

\begin{definition}
Let $c_X$, $c_Y$, and $c_Z$, denote the number of (current) connected components of $X$, $Y$, and $Z$, respectively.
The \emph{type-I cube} and the \emph{type-II cube} are given in Figure~\ref{p2}.
The top left vertex is the \emph{origin of the cube}, that is, the vertex $(x,y,z)$ such that any other vertex $(x',y',z')$ of the cube satisfies $0 \le x-x', y-y', z-z' \le 1 \pmod{4^n}$.

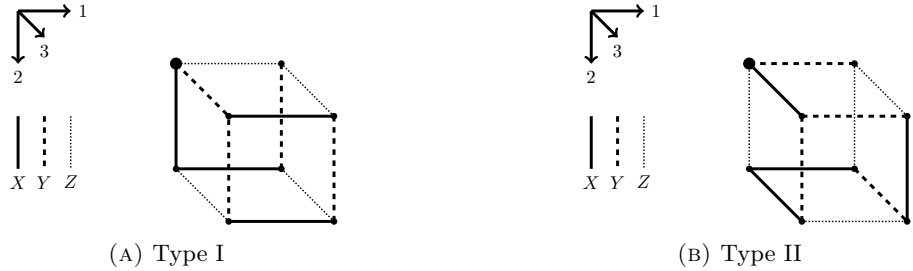
\begin{figure}[H]
\centering
\begin{subfigure}{.5\textwidth}
  \centering

\scalebox{.7}{
\begin{tikzpicture}

\foreach \i in {0,1}{
	\foreach \j in {0,1}{
		\foreach \k in {0,1}{
		        \node[vertex, fill, minimum size = 3pt] at (0+ 2 * \i + \k, 3 - 2 * \j - \k) {};
      		        }
	}
}
\node[vertex, fill, minimum size = 6pt] at (0, 3) {};

\draw[->,ultra thick] (-3,4)--(-2,4) node[right]{$1$};
\draw[->,ultra thick] (-3,4)--(-3,3) node[below]{$2$};
\draw[->,ultra thick] (-3,4)--(-2.5,3.5) node[below]{$3$};

\draw[solid,ultra thick] (-3,2)--(-3,1) node[below]{$X$};
\draw[dashed,ultra thick] (-2.5,2)--(-2.5,1) node[below]{$Y$};
\draw[densely dotted,thick] (-2,2)--(-2,1) node[below]{$Z$};

\draw[solid,ultra thick] (0,3)--(0,1)--(2,1);
\draw[solid,ultra thick] (1,0)--(3,0);
\draw[solid,ultra thick] (1,2)--(3,2);

\draw[dashed,ultra thick] (0,3)--(1,2)--(1,0);
\draw[dashed,ultra thick] (2,3)--(2,1);
\draw[dashed,ultra thick] (3,0)--(3,2);

\draw[densely dotted, thick] (3,2)--(2,3)--(0,3);
\draw[densely dotted, thick] (0,1)--(1,0);
\draw[densely dotted, thick] (2,1)--(3,0);
\end{tikzpicture}
}

  \subcaption{Type I \label{x2a}}
\end{subfigure}%
\begin{subfigure}{.5\textwidth}
  \centering

\scalebox{.7}{
\begin{tikzpicture}
\foreach \i in {0,1}{
	\foreach \j in {0,1}{
		\foreach \k in {0,1}{
		        \node[vertex, fill, minimum size = 3pt] at (0+ 2 * \i + \k, 3 - 2 * \j - \k) {};
      		        }
	}
}
\node[vertex, fill, minimum size = 6pt] at (0, 3) {};

\draw[->,ultra thick] (-3,4)--(-2,4) node[right]{$1$};
\draw[->,ultra thick] (-3,4)--(-3,3) node[below]{$2$};
\draw[->,ultra thick] (-3,4)--(-2.5,3.5) node[below]{$3$};

\draw[solid,ultra thick] (-3,2)--(-3,1) node[below]{$X$};
\draw[dashed,ultra thick] (-2.5,2)--(-2.5,1) node[below]{$Y$};
\draw[densely dotted,thick] (-2,2)--(-2,1) node[below]{$Z$};

\draw[densely dotted, thick] (0,3)--(0,1);
\draw[solid,ultra thick] (0,1)--(2,1);
\draw[densely dotted, thick] (1,0)--(3,0);
\draw[dashed,ultra thick] (1,2)--(3,2);

\draw[solid,ultra thick] (0,3)--(1,2);
\draw[dashed,ultra thick](1,2)--(1,0);
\draw[densely dotted, thick] (2,3)--(2,1);
\draw[solid,ultra thick] (3,0)--(3,2);

\draw[densely dotted, thick] (3,2)--(2,3);
\draw[dashed,ultra thick] (2,3)--(0,3);
\draw[solid,ultra thick] (0,1)--(1,0);
\draw[dashed,ultra thick] (2,1)--(3,0);
\end{tikzpicture}
}

  \subcaption{Type II \label{x2b}}
\end{subfigure}

\caption{ \footnotesize The special cube type-I (\ref{x2a}) and the special cube type-II (\ref{x2b}).}\label{p2}

\end{figure}

By \emph{merging} a type-I cube we replace it with a type-II cube.
Note that the vertices maintain their $X$-, $Y$-, and $Z$-degrees during the merge operation.

\end{definition}

The aim of the merge operation is to reduce each of $c_X$, $c_Y$, and $c_Z$ by 1.
Before starting to merge, we need to make sure that we have enough type-I cubes and that this three-way switch in colors indeed merges six cycles into three.
We make a couple of observations.

\begin{observation} \label{obs1}
Consider $G_{n,3}$ and decompose it with the method described in Lemma~\ref{l2}.
Then every vertex $(x,y,z)$ with $x+y+z = -1 \pmod{4^n}$ is the origin of a type-I cube.
\end{observation}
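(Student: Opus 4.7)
The plan is to take the unit subcube $C$ with origin $v_0 = (x,y,z)$ and to verify, edge by edge, that the twelve-edge $X,Y,Z$-labeling of $C$ prescribed by Lemma~\ref{l2} matches the type-I pattern of Figure~\ref{p2}(a).

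First I would enumerate the eight vertices of $C$ as $v_0 + (a,b,c)$ with $(a,b,c) \in \{0,1\}^3$ and compute their coordinate sums modulo $4^n$. Since $x+y+z \equiv -1 \pmod{4^n}$, these sums are, in some order, $-1, 0, 0, 0, 1, 1, 1, 2$. As $4^n \ge 4$, none of $0, 1, 2$ coincides with $-1$ modulo $4^n$, so $v_0$ is the unique vertex of $C$ whose sum is exceptional in the sense of Lemma~\ref{l2}.

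Next I would sweep the twelve edges grouped by direction; each edge is classified by the sum at its tail (smaller-coordinate) endpoint. Among the four dimension-$1$ edges, only the one leaving $v_0$ has tail-sum $\equiv -1 \pmod{4^n}$, so it lies in $Z$ while the other three lie in $X$. The analogous statements for dimensions $2$ and $3$ assign $v_0 \to v_0 + e_2$ to $X$ (the other three dimension-$2$ edges to $Y$) and $v_0 \to v_0 + e_3$ to $Y$ (the other three dimension-$3$ edges to $Z$). Reading Figure~\ref{p2}(a) directly, one sees precisely this pattern: the three edges leaving the top-left vertex carry the rotated labels $X, Y, Z$ in dimensions $2, 3, 1$ respectively, while the nine remaining edges carry the normal labels. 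Hence $C$ is a type-I cube with origin $v_0$.

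The one point requiring care is confirming that none of the other seven vertices of $C$ has coordinate sum $\equiv -1 \pmod{4^n}$; this is exactly where the hypothesis $4^n \ge 4$ enters, ruling out any accidental coincidence. Beyond that, the argument is a direct twelve-edge verification, and I do not anticipate any genuine obstacle.
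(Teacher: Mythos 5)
Your verification is correct and is exactly the check the paper leaves implicit: Observation~\ref{obs1} is stated without proof, and your edge-by-edge comparison of the Lemma~\ref{l2} labeling at a vertex of coordinate sum $-1 \pmod{4^n}$ against Figure~\ref{p2}(a) --- including the remark that the seven other corner sums $0,1,2$ cannot collide with $-1$ since $4^n \ge 4$ --- is precisely the intended argument. No gaps.
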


\begin{observation} \label{obs2}
Figure~\ref{p3} shows that, a single merge operation, done on the decomposition obtained from Lemma~\ref{l2}, indeed merges six cycles, two in each of $X$, $Y$, and $Z$, into three cycles, one in each of $X$, $Y$, and $Z$.
\end{observation}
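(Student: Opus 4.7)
The plan is to verify the cycle merge by direct analysis of the $X$-class and an appeal to a cyclic symmetry for $Y$ and $Z$.

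First I identify the six cycles. By the rules of Lemma~\ref{l2}, $X$-edges use only dimensions $1$ and $2$ (never $3$), so each $X$-cycle lies in a plane of constant $z$; analogously, $Y$-cycles lie in constant-$x$ planes and $Z$-cycles in constant-$y$ planes. Since the cube spans two values of each coordinate, it meets exactly two cycles of each color, which are the six cycles of the claim.

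Next I analyze the $X$-class explicitly. Label the cube vertices $v_1,\dots,v_8$ with $v_1 = (x,y,z)$ the origin, so $v_1$ is the unique cube vertex with coordinate sum $\equiv -1 \pmod{4^n}$. The rules of Lemma~\ref{l2} force the type-I $X$-edges in the cube to be $\{v_1 v_3,\, v_3 v_4,\, v_5 v_6,\, v_7 v_8\}$ (a $2$-edge path in the $z$-plane together with two disjoint edges in the $(z+1)$-plane), and reading off Figure~\ref{p2} the type-II $X$-edges are $\{v_1 v_5,\, v_3 v_4,\, v_3 v_7,\, v_6 v_8\}$. Deleting the three lost $X$-edges $v_1v_3,\, v_5v_6,\, v_7v_8$ cuts the two affected $X$-cycles into three paths with endpoint pairs $\{v_1,v_3\}$, $\{v_5,v_6\}$, $\{v_7,v_8\}$, and the three new $X$-edges form a perfect matching on these six endpoints. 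I would verify by walking around the auxiliary six-vertex multigraph (three virtual path-edges plus the three matching edges) that the matching splices the three paths into a single cycle, exhibited as $v_1 \to v_3 \to v_7 \to v_6 \to v_8 \to v_5 \to v_1$.

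Finally, the decomposition of Lemma~\ref{l2} is invariant under the simultaneous cyclic permutation $\sigma:(x,y,z) \mapsto (z,x,y)$ of axes together with $(X, Y, Z) \mapsto (Y, Z, X)$ of colors; since the vertex sum is $\sigma$-invariant, this symmetry sends type-I cubes to type-I cubes and type-II cubes to type-II cubes. Consequently the $X$-splicing argument transports verbatim via $\sigma$ and $\sigma^2$ to give the analogous splicings of the two $Y$-cycles and the two $Z$-cycles, each reducing its cycle count by one; in total the six cycles collapse to three, as asserted. The only real obstacle is the combinatorial bookkeeping needed to list the four edges of each color in each cube type; once this is done, everything else is either a walk around a small multigraph or an invocation of the cyclic symmetry.
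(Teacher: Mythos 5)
Your argument is sound in outline and is in fact more rigorous than the paper's own justification, which consists entirely of pointing at Figure~\ref{p3}. The reduction to a single colour class via the cyclic symmetry $\sigma:(x,y,z)\mapsto(z,x,y)$, $(X,Y,Z)\mapsto(Y,Z,X)$ is correct (one checks easily from the rules of Lemma~\ref{l2} that $\sigma$ carries $X$-edges to $Y$-edges, etc., and preserves the coordinate sum, hence the cube types); the paper only introduces and exploits this symmetry later, in Theorem~\ref{t1}, so using it here is a genuine economy. Your identification of the four $X$-edges of each cube type also agrees with Figure~\ref{p2}.

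There is, however, one step that is stated incorrectly, and it happens to sit exactly on the delicate point of the whole construction. Deleting the two edges $v_5v_6$ and $v_7v_8$ from the single cycle $C_{z+1}=X\cap Z^n_{z+1}$ does \emph{not} produce paths with endpoint pairs $\{v_5,v_6\}$ and $\{v_7,v_8\}$ --- those pairs are the deleted edges. Removing two edges from one cycle re-pairs the four endpoints according to the cyclic order of the deleted edges: since $v_5\to v_6$ and $v_7\to v_8$ are both (directed) edges of $C_{z+1}$, the cycle reads $\ldots v_5,v_6,\ldots,v_7,v_8,\ldots$, so the two resulting paths run from $v_6$ to $v_7$ and from $v_8$ to $v_5$. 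Your exhibited walk $v_1\to v_3\to v_7\to v_6\to v_8\to v_5\to v_1$ silently uses this \emph{correct} pairing (it traverses a path from $v_7$ to $v_6$), contradicting the pairing you wrote down two sentences earlier. In this particular instance either pairing happens to close up into a single $6$-cycle in the auxiliary multigraph, so your conclusion survives, but the pairing $\{v_5,v_7\},\{v_6,v_8\}$ would not (it collides with the new edge $v_6v_8$), so the cyclic order genuinely must be argued, not assumed. This is precisely the issue that Observation~\ref{obs3} and Lemmas~\ref{l3} and~\ref{l4} exist to control in the iterated setting; treating it as automatic here would propagate into a real error there. The fix is one sentence: observe that $C_{z+1}$ is the staircase cycle of the plane $z+1$, on which $v_5\to v_6$ and $v_7\to v_8$ are edges, forcing the endpoint pairs $\{v_6,v_7\}$ and $\{v_8,v_5\}$, and then run your walk.
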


\begin{figure}[H]
\centering
\begin{subfigure}{\textwidth}
  \centering

\begin{tikzpicture}
\foreach \i in {0,2}{
	\foreach \j in {1,3}{
		\foreach \k in {0,1}{
		        \node[vertex, fill, minimum size = 3pt] at (\i + \k, \j - \k) {};
      		        }
	}
}
\foreach \i in {8,10}{
	\foreach \j in {1,3}{
		\foreach \k in {0,1}{
		        \node[vertex, fill, minimum size = 3pt] at (\i + \k, \j - \k) {};
      		        }
	}
}
\node[vertex, fill, minimum size = 6pt] at (0, 3) {};
\node[vertex, fill, minimum size = 6pt] at (8, 3) {};

\node[vertex, fill, minimum size = 6pt] at (0, 3) {};    
\draw[->,ultra thick] (-3,4)--(-2,4) node[right]{$1$};
\draw[->,ultra thick] (-3,4)--(-3,3) node[below]{$2$};
\draw[->,ultra thick] (-3,4)--(-2.5,3.5) node[below]{$3$};

\draw[solid,ultra thick] (-3,2)--(-3,1) node[below]{$X$};
\draw[dashed,ultra thin] (-2.5,2)--(-2.5,1) node[below]{$Y$};
\draw[densely dotted,thin] (-2,2)--(-2,1) node[below]{$Z$};

\draw[solid,ultra thick] (0,3)--(0,1)--(2,1);
\draw[solid,ultra thick] (1,0)--(3,0);
\draw[solid,ultra thick] (1,2)--(3,2);

\draw[dashed,ultra thin] (0,3)--(1,2)--(1,0);
\draw[dashed,ultra thin] (2,3)--(2,1);
\draw[dashed,ultra thin] (3,0)--(3,2);

\draw[densely dotted, thin] (3,2)--(2,3)--(0,3);
\draw[densely dotted, thin] (0,1)--(1,0);
\draw[densely dotted, thin] (2,1)--(3,0);

\draw[->,ultra thick] (5,2)--(6,2) ;

\foreach \i in {3}{
\draw[densely dotted, thin] (\i + 5,3)--(\i + 5,1);
\draw[solid,ultra thick] (\i + 5,1)--(\i + 7,1);
\draw[densely dotted, thin] (\i + 6,0)--(\i + 8,0);
\draw[dashed,ultra thin] (\i + 6,2)--(\i + 8,2);

\draw[solid,ultra thick] (\i + 5,3)--(\i + 6,2);
\draw[dashed,ultra thin](\i + 6,2)--(\i + 6,0);
\draw[densely dotted, thin] (\i + 7,3)--(\i + 7,1);
\draw[solid,ultra thick] (\i + 8,0)--(\i + 8,2);

\draw[densely dotted, thin] (\i + 8,2)--(\i + 7,3);
\draw[dashed,ultra thin] (\i + 7,3)--(\i + 5,3);
\draw[solid,ultra thick] (\i + 5,1)--(\i + 6,0);
\draw[dashed,ultra thin] (\i + 7,1)--(\i + 8,0);
}

\foreach \i in {0,8}{
\draw[solid,ultra thick] (\i + 2,1)--(\i + 3.5,1) .. controls (\i + 4,3)  .. (\i + 2, 4)--(\i + 2,3) -- (\i + 3.5,3) .. controls (\i + 1,3.4) .. (\i + -1, 3) -- (\i + 0,3);
\draw[solid,ultra thick] (\i + 3,2) -- (\i + 3.5,2) .. controls (\i + 1,2.4) .. (\i + -.5, 2) -- (\i + -.5,0) -- (\i + 1,0);
\draw[solid,ultra thick] (\i + 3,0) -- (\i + 3.5,0) .. controls (\i + 5,3) .. (\i + 1, 2.5) -- (\i + 1,2);
}
\end{tikzpicture}

\end{subfigure}
\newline
\begin{subfigure}{ \textwidth}
  \centering

\begin{tikzpicture}
\foreach \i in {0,2}{
	\foreach \j in {1,3}{
		\foreach \k in {0,1}{
		        \node[vertex, fill, minimum size = 3pt] at (\i + \k, \j - \k) {};
      		        }
	}
}
\foreach \i in {8,10}{
	\foreach \j in {1,3}{
		\foreach \k in {0,1}{
		        \node[vertex, fill, minimum size = 3pt] at (\i + \k, \j - \k) {};
      		        }
	}
}
\node[vertex, fill, minimum size = 6pt] at (0, 3) {};
\node[vertex, fill, minimum size = 6pt] at (8, 3) {};

\node[vertex, fill, minimum size = 6pt] at (0, 3) {};    
\draw[->,ultra thick] (-3,4)--(-2,4) node[right]{$1$};
\draw[->,ultra thick] (-3,4)--(-3,3) node[below]{$2$};
\draw[->,ultra thick] (-3,4)--(-2.5,3.5) node[below]{$3$};

\draw[solid,ultra thin] (-3,2)--(-3,1) node[below]{$X$};
\draw[dashed,ultra thick] (-2.5,2)--(-2.5,1) node[below]{$Y$};
\draw[densely dotted,thin] (-2,2)--(-2,1) node[below]{$Z$};

\draw[solid,ultra thin] (0,3)--(0,1)--(2,1);
\draw[solid,ultra thin] (1,0)--(3,0);
\draw[solid,ultra thin] (1,2)--(3,2);

\draw[dashed,ultra thick] (0,3)--(1,2)--(1,0);
\draw[dashed,ultra thick] (2,3)--(2,1);
\draw[dashed,ultra thick] (3,0)--(3,2);

\draw[densely dotted, thin] (3,2)--(2,3)--(0,3);
\draw[densely dotted, thin] (0,1)--(1,0);
\draw[densely dotted, thin] (2,1)--(3,0);

\draw[->,ultra thick] (5,2)--(6,2) ;

\foreach \i in {3}{
\draw[densely dotted, thin] (\i + 5,3)--(\i + 5,1);
\draw[solid,ultra thin] (\i + 5,1)--(\i + 7,1);
\draw[densely dotted, thin] (\i + 6,0)--(\i + 8,0);
\draw[dashed,ultra thick] (\i + 6,2)--(\i + 8,2);

\draw[solid,ultra thin] (\i + 5,3)--(\i + 6,2);
\draw[dashed,ultra thick](\i + 6,2)--(\i + 6,0);
\draw[densely dotted, thin] (\i + 7,3)--(\i + 7,1);
\draw[solid,ultra thin] (\i + 8,0)--(\i + 8,2);

\draw[densely dotted, thin] (\i + 8,2)--(\i + 7,3);
\draw[dashed,ultra thick] (\i + 7,3)--(\i + 5,3);
\draw[solid,ultra thin] (\i + 5,1)--(\i + 6,0);
\draw[dashed,ultra thick] (\i + 7,1)--(\i + 8,0);
}

\foreach \i in {0,8}{
\draw[dashed,ultra thick] (\i + 1,0) -- (\i + 1,-.5) .. controls (\i + -.2,-1) .. (\i + -.8, 1.8) -- (\i + 0,1) -- (\i + 0,-.5) .. controls (\i + -.5 ,2) .. (\i + 0, 4) -- (\i + 0,3);
\draw[dashed,ultra thick] (\i + 2,1) -- (\i + 2,-1) .. controls (\i + 2.4,2.5) .. (\i +  2,3.5) -- (\i + 3,2.5) -- (\i + 3,2);
\draw[dashed,ultra thick] (\i + 3,0) -- (\i + 3,-1) .. controls (\i + 4,4.5) .. (\i + 1.5,3.5)-- (\i + 2,3);

}
\end{tikzpicture}

\end{subfigure}
\begin{subfigure}{ \textwidth}
  \centering

\begin{tikzpicture}
\foreach \i in {0,2}{
	\foreach \j in {1,3}{
		\foreach \k in {0,1}{
		        \node[vertex, fill, minimum size = 3pt] at (\i + \k, \j - \k) {};
      		        }
	}
}
\foreach \i in {8,10}{
	\foreach \j in {1,3}{
		\foreach \k in {0,1}{
		        \node[vertex, fill, minimum size = 3pt] at (\i + \k, \j - \k) {};
      		        }
	}
}
\node[vertex, fill, minimum size = 6pt] at (0, 3) {};
\node[vertex, fill, minimum size = 6pt] at (8, 3) {};

\node[vertex, fill, minimum size = 6pt] at (0, 3) {};    
\draw[->,ultra thick] (-3,4)--(-2,4) node[right]{$1$};
\draw[->,ultra thick] (-3,4)--(-3,3) node[below]{$2$};
\draw[->,ultra thick] (-3,4)--(-2.5,3.5) node[below]{$3$};

\draw[solid,ultra thin] (-3,2)--(-3,1) node[below]{$X$};
\draw[dashed,ultra thin] (-2.5,2)--(-2.5,1) node[below]{$Y$};
\draw[densely dotted,thick] (-2,2)--(-2,1) node[below]{$Z$};

\draw[solid,ultra thin] (0,3)--(0,1)--(2,1);
\draw[solid,ultra thin] (1,0)--(3,0);
\draw[solid,ultra thin] (1,2)--(3,2);

\draw[dashed,ultra thin] (0,3)--(1,2)--(1,0);
\draw[dashed,ultra thin] (2,3)--(2,1);
\draw[dashed,ultra thin] (3,0)--(3,2);

\draw[densely dotted, thick] (3,2)--(2,3)--(0,3);
\draw[densely dotted, thick] (0,1)--(1,0);
\draw[densely dotted, thick] (2,1)--(3,0);

\draw[->,ultra thick] (5,2)--(6,2) ;

\foreach \i in {3}{
\draw[densely dotted, thick] (\i + 5,3)--(\i + 5,1);
\draw[solid,ultra thin] (\i + 5,1)--(\i + 7,1);
\draw[densely dotted, thick] (\i + 6,0)--(\i + 8,0);
\draw[dashed,ultra thin] (\i + 6,2)--(\i + 8,2);

\draw[solid,ultra thin] (\i + 5,3)--(\i + 6,2);
\draw[dashed,ultra thin](\i + 6,2)--(\i + 6,0);
\draw[densely dotted, thick] (\i + 7,3)--(\i + 7,1);
\draw[solid,ultra thin] (\i + 8,0)--(\i + 8,2);

\draw[densely dotted, thick] (\i + 8,2)--(\i + 7,3);
\draw[dashed,ultra thin] (\i + 7,3)--(\i + 5,3);
\draw[solid,ultra thin] (\i + 5,1)--(\i + 6,0);
\draw[dashed,ultra thin] (\i + 7,1)--(\i + 8,0);
}

\foreach \i in {0,8}{
\draw[densely dotted, thick] (\i +  3,2) -- (\i +  4,1).. controls (\i + 1.4,0) .. (\i + .3,2) -- (\i + 1,2)--(\i + 1.4,1.6) .. controls (\i + 1,3) .. (\i + -.5,3.5) -- (\i + 0,3);
\draw[densely dotted, thick] (\i +  -1,1) -- (\i + 0,1);
\draw[densely dotted, thick] (\i +  1,0) -- (\i + 1.8,-.8) .. controls (\i + 0,0) .. (\i + -.4,1.4) -- (\i + 1.6,1.4) -- (\i + 2,1);
\draw[densely dotted, thick] (\i +  3,0) -- (\i + 4,-1) .. controls (\i + -1,-1) .. (\i + -1,1);
}

\end{tikzpicture}

\end{subfigure}

\caption{The cycles merged during the merge operation}\label{p3}
\end{figure}
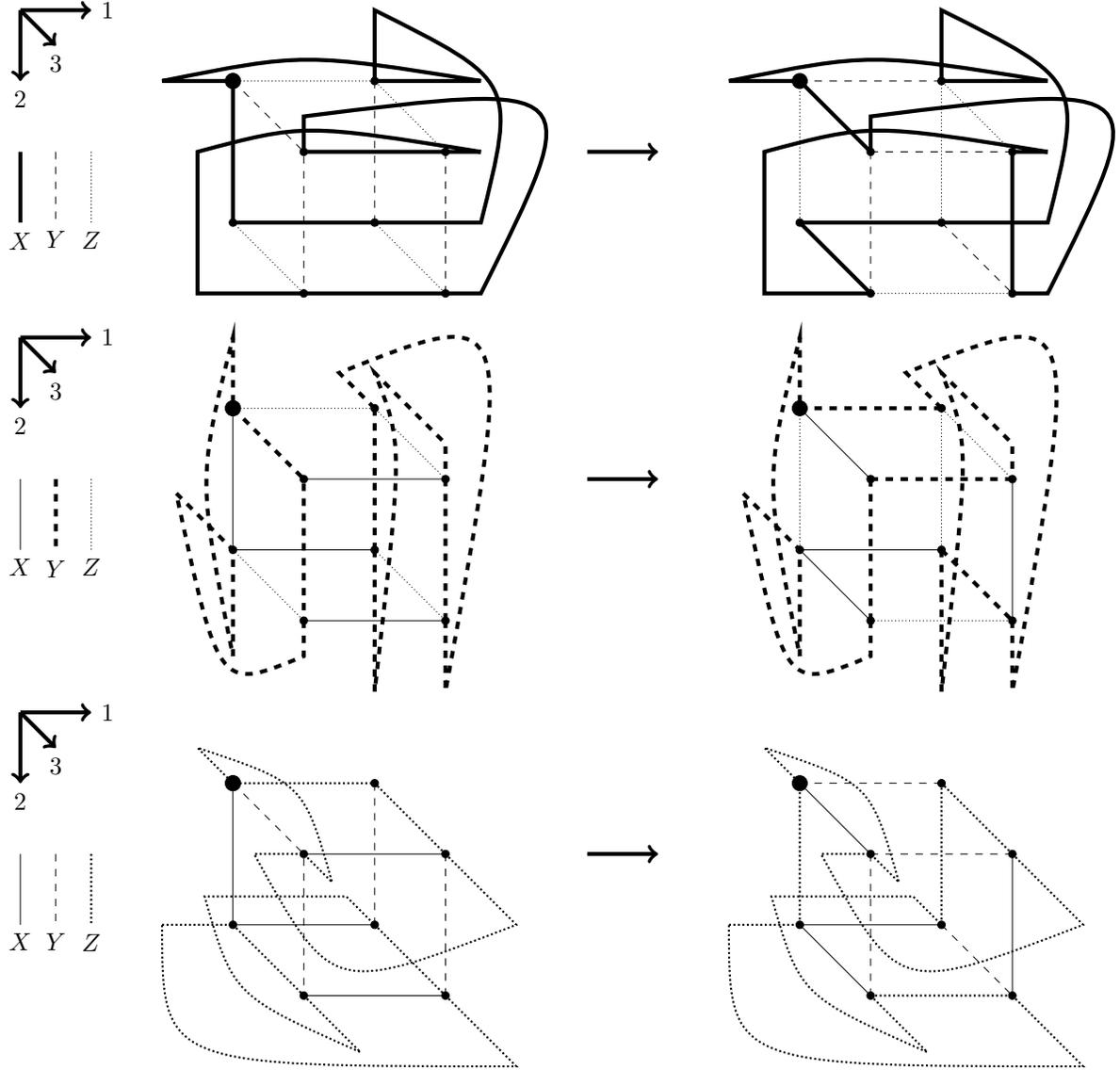

Of course, we need another $4^n-2$ of these merge operations, and as we progress, the structures of the cycles change, which could possibly cause a merge operation to ``fail'' to combine six cycles into three. Hence, Lemma~\ref{l3} is crucial.

\begin{definition}
For $i \le j$ let $[i,j]$ be the set $\{i,i+1, \ldots , j \}$. For $0 \le i < 4^n$, define $Z^n_i$ to be the set of vertices of $G_{n,3}$ that have their 3rd coordinate equal to $i$. Finally, let
\begin{align*}
Z^n_{[i,j]} = \bigcup\limits_{i \le k \le j} Z^n_k
\end{align*}
\end{definition}

\begin{definition}
Let $C$ be a cycle and $S$ be a subset of $V(C)$.
The \emph{$C$-necklace-order with respect to $S$} is the order in which the vertices of $S$ appear in $C$.
As its name suggests, shifting or reversing the direction of $C$ does not change its order (with respect to any vertex set).
\end{definition}

\begin{observation} \label{obs3}
Let $v = (x,y,z)$ be the origin of a type-I cube $L$.
Figure~\ref{p4} shows that the $X \cap Z^n_x$-necklace-order with respect to $Z^n_x$ (before merge) is the same as $X \cap Z^n_{[x,x+1]}$-necklace-order with respect to $Z^n_x$ (after merge).
Indeed, the only change to $X \cap Z^n_x$ is the removal of the edge $uv$, which is replaced by a detour through $Z^n_{x+1}$.
This augmentation does not change the order of vertices of $Z^n_x$.
\end{observation}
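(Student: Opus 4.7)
The plan is a direct local check: identify which X-edges of the cube $L$ lie in $Z^n_x$ before and after the merge, and observe that the replacement detour lives entirely in $Z^n_{x+1}$.

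I would first set up local coordinates so that the eight vertices of $L$ are $(x+i, y+j, z+k)$ with $i, j, k \in \{0, 1\}$; by Observation~\ref{obs1}, $x + y + z \equiv -1 \pmod{4^n}$. Applying Lemma~\ref{l2} to each of the twelve edges of $L$ confirms the X/Y/Z labels drawn in Figure~\ref{x2a}. In particular, the two X-edges of $L$ contained in $Z^n_x$ are the direction-$2$ edge $vu$ with $u = (x, y+1, z)$ and the direction-$1$ edge $u - (x+1, y+1, z)$. Repeating the inventory for the type-II cube in Figure~\ref{x2b} shows that the only X-edge of $L$ in $Z^n_x$ after the merge is $u - (x+1, y+1, z)$: the edge $vu$ has been relabeled $Z$. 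Since no edge of $G_{n,3}$ outside $L$ changes its label under the merge operation, the only modification of $X \cap Z^n_x$ is the deletion of $vu$, which reduces the old X-cycle of $Z^n_x$ to a Hamilton path of that slice from $v$ to $u$.

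I would then assemble the replacement detour. After the merge the cube acquires the two direction-$3$ X-edges $v - (x, y, z+1)$ and $u - (x, y+1, z+1)$, and inside $Z^n_{x+1}$ the old X-cycle is rewired by losing the two direction-$1$ cube edges on the top face and gaining the direction-$2$ cube edge $(x+1, y, z+1) - (x+1, y+1, z+1)$. A short trace through the cyclic structure of the old X-cycle in $Z^n_{x+1}$ shows that these changes collapse it into a Hamilton path of that slice from $(x, y, z+1)$ to $(x, y+1, z+1)$, and prepending and appending the two direction-$3$ bridges produces a $v$-to-$u$ path whose interior lies entirely in $Z^n_{x+1}$. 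Splicing this detour into the Hamilton path $v - \cdots - u$ obtained by deleting $vu$ from the old X-cycle of $Z^n_x$ closes up a single cycle in $X \cap Z^n_{[x, x+1]}$ whose restriction to $Z^n_x$ visits the same vertices in the same cyclic order as the old X-cycle of $Z^n_x$, which is the asserted necklace-order equality.

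The only nontrivial work is the edge-by-edge comparison of Figures~\ref{x2a} and~\ref{x2b} under the sign conventions of Lemma~\ref{l2}, and the main risk is mislabeling a direction or orientation there; once that bookkeeping is correct, the necklace-order conclusion is immediate because the detour meets $Z^n_x$ only at its endpoints $v$ and $u$.
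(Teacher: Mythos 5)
Your proposal is correct and follows essentially the same route as the paper, which justifies Observation~\ref{obs3} by the picture in Figure~\ref{p4}: you simply make the figure check explicit by applying the labeling rules of Lemma~\ref{l2} to the twelve edges of the cube, confirming that within the slice containing $v$ only the edge $uv$ leaves $X$ while the replacement detour meets that slice only at $v$ and $u$. Your additional trace showing the rewired pieces form a single cycle is slightly more than the observation requires (that part is the content of Observation~\ref{obs2} and Lemma~\ref{l3}), but it is accurate.
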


\begin{figure}[H]
\centering
\scalebox{.9}{
\begin{tikzpicture}
\foreach \i in {0,2}{
	\foreach \j in {1,3}{
		\foreach \k in {0,1}{
		        \node[vertex, fill, minimum size = 3pt] at (\i + \k, \j - \k) {};
      		        }
	}
}
\foreach \i in {8,10}{
	\foreach \j in {1,3}{
		\foreach \k in {0,1}{
		        \node[vertex, fill, minimum size = 3pt] at (\i + \k, \j - \k) {};
      		        }
	}
}
\node[vertex, fill, minimum size = 6pt] at (0, 3) {};
\node[vertex, fill, minimum size = 6pt] at (8, 3) {};

\foreach \i in {0,8}{
\node at (\i,3.5) {$v$};
\node at (\i + .2,1.3) {$u$};
}

\node[vertex, fill, minimum size = 6pt] at (0, 3) {};    
\draw[->,ultra thick] (-3,4)--(-2,4) node[right]{$1$};
\draw[->,ultra thick] (-3,4)--(-3,3) node[below]{$2$};
\draw[->,ultra thick] (-3,4)--(-2.5,3.5) node[below]{$3$};

\draw[densely dotted, thick] (-3,2)--(-3,1) node[above right]{$X \cap Z^n_x$};
\draw[dashed, thick] (-3,1)--(-3,0) node[above right]{$X \cap Z^n_{x+1}$};
\draw[densely dashdotdotted,thick] (-3,0)--(-3,-1) node[above right]{$X \cap Z^n_{[x,x+1]}$};

\draw[densely dotted, thick] (0,3)--(0,1)--(2,1);
\draw[dashed, thick] (1,0)--(3,0);
\draw[dashed, thick] (1,2)--(3,2);

\draw[solid,ultra thin] (0,3)--(1,2)--(1,0);
\draw[solid,ultra thin] (2,3)--(2,1);
\draw[solid,ultra thin] (3,0)--(3,2);

\draw[solid, ultra thin] (3,2)--(2,3)--(0,3);
\draw[solid, ultra thin] (0,1)--(1,0);
\draw[solid, ultra thin] (2,1)--(3,0);

\draw[->,ultra thick] (5.3,2)--(6.3,2) ;

\foreach \i in {3}{
\draw[solid, ultra thin] (\i + 5,3)--(\i + 5,1);
\draw[densely dashdotdotted,thick] (\i + 5,1)--(\i + 7,1);
\draw[solid, ultra thin] (\i + 6,0)--(\i + 8,0);
\draw[solid,ultra thin] (\i + 6,2)--(\i + 8,2);

\draw[densely dashdotdotted,thick] (\i + 5,3)--(\i + 6,2);
\draw[solid,ultra thin](\i + 6,2)--(\i + 6,0);
\draw[solid, ultra thin] (\i + 7,3)--(\i + 7,1);
\draw[densely dashdotdotted,thick] (\i + 8,0)--(\i + 8,2);

\draw[solid, ultra thin] (\i + 8,2)--(\i + 7,3);
\draw[solid,ultra thin] (\i + 7,3)--(\i + 5,3);
\draw[densely dashdotdotted,thick] (\i + 5,1)--(\i + 6,0);
\draw[solid,ultra thin] (\i + 7,1)--(\i + 8,0);
}

\foreach \i in {0}{
\draw[densely dotted, thick] (\i + 2,1)--(\i + 3.5,1) .. controls (\i + 4,3)  .. (\i + 2, 4)--(\i + 2,3) -- (\i + 3.5,3) .. controls (\i + 1,3.4) .. (\i + -1, 3) -- (\i + 0,3);
\draw[dashed, thick] (\i + 3,2) -- (\i + 3.5,2) .. controls (\i + 1,2.4) .. (\i + -.5, 2) -- (\i + -.5,0) -- (\i + 1,0);
\draw[dashed, thick] (\i + 3,0) -- (\i + 3.5,0) .. controls (\i + 5,3) .. (\i + 1, 2.5) -- (\i + 1,2);
}

\foreach \i in {8}{
\draw[densely dashdotdotted,thick] (\i + 2,1)--(\i + 3.5,1) .. controls (\i + 4,3)  .. (\i + 2, 4)--(\i + 2,3) -- (\i + 3.5,3) .. controls (\i + 1,3.4) .. (\i + -1, 3) -- (\i + 0,3);
\draw[densely dashdotdotted,thick] (\i + 3,2) -- (\i + 3.5,2) .. controls (\i + 1,2.4) .. (\i + -.5, 2) -- (\i + -.5,0) -- (\i + 1,0);
\draw[densely dashdotdotted,thick] (\i + 3,0) -- (\i + 3.5,0) .. controls (\i + 5,3) .. (\i + 1, 2.5) -- (\i + 1,2);
}

\end{tikzpicture}
}
\caption{The $X \cap Z^n_x$-necklace-order with respect to $Z^n_x$ (left) is the same as the $X \cap Z^n_{[x,x+1]}$-necklace-order with respect to $Z^n_x$ (right).
}\label{p4}
\end{figure}
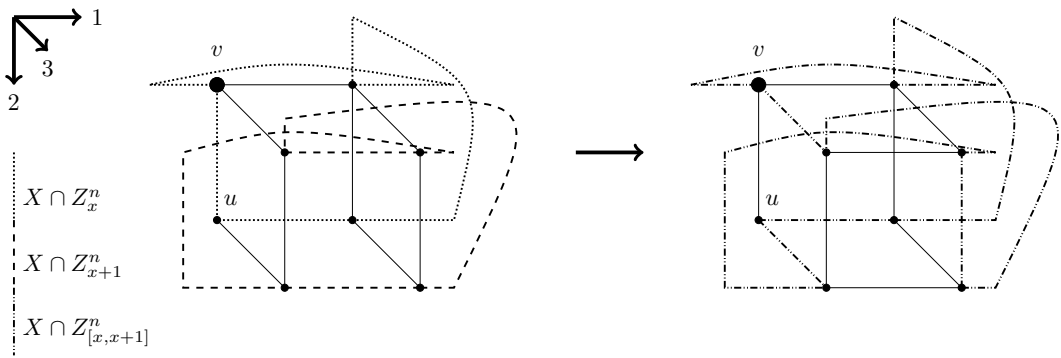

\begin{lemma} \label{l3}
Suppose that we just decomposed the edge set of $G_{n,3}$ with the method given in Lemma~\ref{l2}.
Let $v = (x,y,z)$ and $v' = (x',y',z')$ be such that $x+y+z = x'+y'+z' = -1 \pmod{4^n}$ and $z = z'+1 \pmod{4^n}$, and let $L$ and $L'$ be the type-I cubes with origins at $v$ and $v'$, respectively.
If we merge $L$ first and then $L'$, we reduce $c_X$ by 2.
\end{lemma}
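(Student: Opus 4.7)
The plan is to track the $X$-cycles through both merges. Under the coloring rule of Lemma~\ref{l2}, no direction-$3$ edge belongs to $X$, so each of the $4^n$ $X$-cycles produced by that lemma lives in a single $z$-layer; write $X_k$ for the one in layer $k$. Since $L$ occupies layers $z$ and $z+1$, the first merge modifies only edges in those two layers, and by Observation~\ref{obs2} it fuses $X_z$ and $X_{z+1}$ into a single cycle $\widetilde X$ while leaving every other $X_k$, in particular $X_{z-1}$, unchanged. This accounts for the first drop in $c_X$.

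For the second merge, the cube $L'$ occupies layers $z-1$ and $z$, so any edge of $L'$ that could have been altered by the first merge must lie in $L\cap L'$. The constraint $x'+y'\equiv x+y+1\pmod{4^n}$ together with $z'=z-1$ forces $(x',y')$ to be either $(x,y+1)$ or $(x+1,y)$: in both sub-cases $L\cap L'$ consists of exactly two vertices of layer $z$, joined by a single shared edge. A direct inspection of Figure~\ref{p2} shows that this edge is either an $X$-edge in both the type-I and type-II colorings of $L$ (first sub-case) or a $Y$-edge that becomes a $Z$-edge under the first merge (second sub-case). In either sub-case, the first merge neither creates nor destroys an $X$-edge inside $L'$, so the set of $X$-edges of $L'$ after the first merge is exactly the set of $X$-edges of $L'$ in the fresh Lemma~\ref{l2}-decomposition. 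The second merge therefore acts on $X$ precisely as Observation~\ref{obs2} predicts for an untouched type-I cube: it splices together the two $X$-cycles currently passing through $L'$, namely $X_{z-1}$ and $\widetilde X$. These two cycles are distinct, because one uses only layer-$(z-1)$ edges while the other uses only edges in layers $z$ and $z+1$, so $c_X$ drops by one more, for a total reduction of $2$.

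The main obstacle is exactly the middle step: showing that the first merge does not disturb the local $X$-structure inside $L'$ when the two cubes share vertices in layer $z$. The case check above suffices, but this is also the substance of Observation~\ref{obs3} stated in a more invariant form—the cyclic order in which $X$ visits the vertices of $Z^n_z$ is preserved by the first merge, and this cyclic order is precisely the information the second merge uses to combine two cycles rather than split one.
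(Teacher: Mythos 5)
Your opening step and your analysis of the possible overlap $L\cap L'$ are correct (and the overlap discussion is a point the paper passes over silently; in the application, Lemma~\ref{l4}'s merging-set condition makes the cubes pairwise disjoint). The genuine gap is the inference ``the set of $X$-edges of $L'$ is unchanged, \emph{therefore} the second merge acts on $X$ precisely as Observation~\ref{obs2} predicts and splices $X_{z-1}$ with $\widetilde X$.'' Whether the type-I~$\to$~type-II swap at $L'$ merges two cycles is not determined by the $X$-edges inside $L'$. Concretely, the swap deletes the two direction-$1$ $X$-edges on the layer-$z$ face of $L'$, deletes one direction-$2$ $X$-edge on the layer-$(z-1)$ face, and adds one direction-$2$ edge on the layer-$z$ face together with two direction-$3$ edges. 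Deleting the two layer-$z$ edges cuts $\widetilde X$ into two arcs, and the outcome of the surgery depends on how those arcs pair up the four layer-$z$ vertices of $L'$ \emph{outside} the cube: with the ``wrong'' pairing, the added direction-$2$ edge would close one arc into a cycle by itself, the remaining arc would join with the opened $X_{z-1}$ via the two direction-$3$ edges into a second cycle, and $c_X$ would not drop at all. Nothing in your local case check rules this out, because the first merge does alter $\widetilde X$ outside $L'$ (it reroutes part of the old $X_z$ through layer $z+1$) even when the cubes are disjoint.

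What rules it out is exactly Observation~\ref{obs3}: the first merge preserves the cyclic (necklace) order in which the cycle through layer $z$ visits the layer-$z$ vertices, so the arc pairing seen by $L'$ is the same as in the fresh decomposition of Lemma~\ref{l2}, where Observation~\ref{obs2} shows it is the good one. This is the substance of the paper's proof. So the ``main obstacle'' is not, as you state, whether the first merge disturbs the edges inside $L'$, but whether it disturbs the global order in which the merged cycle traverses layer $z$; your closing sentence names the right invariant, but you present it as a restatement of your local check and assert that the check ``suffices,'' which it does not, since the check is local while the merge-versus-split question is global. To repair the argument, replace the ``therefore'' step by an explicit appeal to (or proof of) the necklace-order preservation of Observation~\ref{obs3}.
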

\begin{proof}
We saw in Observation~\ref{obs2} that a single merge operation always succeeds.
Suppose that we have merged $L$, so that $Z^n_{x+1}$ and $Z^n_{x+2}$ have merged into $Z^n_{[x+1,x+2]}$, and we are about to merge $L'$.
By Observation~\ref{obs3}, the order of vertices in $Z^n_{x+1}$ has not changed, so merging $L'$ will successfully combine $Z^n_{[x+1,x+2]}$ and $Z^n_x$ into a single cycle $Z^n_{[x,x+2]}$.
\end{proof}

We now specify a condition under which all the merge operations are guaranteed to succeed.

\begin{definition}
Let $S \subseteq [0,4^n-1]^3$.
We say that $S$ is a \emph{merging set} if it satisfies the following:
\begin{itemize}
\item $|S| = 4^n-1$,
\item Members $(x,y,z)$ of $S$ satisfy $x+y+z = -1 \pmod{4^n}$, and
\item Distinct members $(x,y,z)$ and $(x',y',z')$ of $S$ satisfy $x \ne x'$, $y \ne y'$, and $z \ne z'$.
\end{itemize}
\end{definition}

We need $4^n-1$ merge operations, each merging six cycles into three.
In order for all these operations to successfully take place, it suffices for the type-I cubes to have their origins in a merging set.
This we show next.

\begin{lemma}
\label{l4}
Consider the following procedure:
\begin{enumerate}[label=\roman*]
\item Decompose the edge set of $G_{n,3}$ with the method given in Lemma~\ref{l2}. \label{s1}
\item Select a merging set $S$. \label{s2}
\item Recognize the $4^{n}-1$ type-I cubes that have their origins in $S$. \label{s3}
\item Replace each type-I cube with a type-II cube. \label{s4}
\end{enumerate}
The following statements hold:
\begin{enumerate}
\item After completing step~\ref{s1}, we have $c_X = c_Y = c_Z = 4^n$, with different components of $X$ being $Z^n_i$'s, different components of $Y$ being $X^n_i$'s, and different components of $Z$ being $Y^n_i$'s.
\item The type-I cubes are pairwise disjoint.
\item After fixing $S$ in step~\ref{s2} and the type-I cubes in step~\ref{s3}, throughout step~\ref{s4} \label{ps3}
\begin{itemize}
\item For a fixed $i$, the vertices of $Z^n_i$ remain in the same component of $X$, the vertices of $X^n_i$ remain in the same component of $Y$, and the vertices of $Y^n_i$ remain in the same component of $Z$, and
\item Every merge operation reduces $c_X$, $c_Y$, and $c_Z$ by 1.
\end{itemize}
In particular, after finishing step~\ref{s4}, we have an H.D. for $G_{n,3}$.
\end{enumerate}
\end{lemma}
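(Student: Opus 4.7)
The plan is to handle the three claims in order, the first via Lemma~\ref{l2}, the second by a residue-count on origins, and the third by an inductive argument on the merges combined with a graph-theoretic model of the plane-components. For (1), I would note that every $X$-edge has direction $1$ or $2$ and so never changes the third coordinate, forcing each $X$-component into some plane $Z^n_i$; Lemma~\ref{l2} delivers $4^n$ disjoint $X$-cycles of length $4^{2n}$, matching the $4^n$ planes of size $4^{2n}$, so counting forces the components of $X$ to be exactly the $Z^n_i$'s. Symmetric arguments handle $Y$ (components $X^n_i$) and $Z$ (components $Y^n_i$).

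For (2), I would suppose two cubes with origins $v=(x,y,z)$ and $v'=(x',y',z')$ in $S$ share a vertex. Then each of $x-x'$, $y-y'$, $z-z'$ lies in $\{-1,0,1\} \pmod{4^n}$; the merging-set hypothesis forces each of these to be nonzero, hence $\equiv \pm 1 \pmod{4^n}$. Because both origins have coordinate-sum $-1 \pmod{4^n}$, the three differences sum to $0 \pmod{4^n}$. Taking representatives in $\{-1,1\}$, the integer sum of three $\pm 1$'s is odd with absolute value at most $3<4\le 4^n$, so it cannot vanish modulo $4^n$---a contradiction.

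For (3), I would induct on the number of completed merges and maintain the invariant that every $X$-component is a union of whole planes $Z^n_i$ (and similarly for $Y$ and $Z$). The base case is (1). For the inductive step, consider the cube with origin $(x,y,z)$ about to be merged; by (2) the cubes are vertex-disjoint and therefore edge-disjoint, so this cube is still in its type-I configuration and Observation~\ref{obs3} applies, preserving the necklace order of the affected plane's vertices in its $X$-cycle and keeping whole planes within a single $X$-component. In parallel, I would track a graph $G_X^\ast$ on the $4^n$ planes, adjoining the edge $\{z,z+1\}$ each time a cube with third-coordinate-of-origin $z$ is merged. Since $S$ has $4^n-1$ distinct third coordinates, exactly one value $z_0$ is skipped and the added edges form a Hamilton path---hence a spanning tree---in $C_{4^n}$; consequently every merge joins two previously distinct components, and $c_X$ drops by exactly $1$ per merge. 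Analogous graphs $G_Y^\ast$ and $G_Z^\ast$, using the distinct first and second coordinates of $S$, give the corresponding drops for $c_Y$ and $c_Z$. After all $4^n-1$ merges, $c_X=c_Y=c_Z=1$, which is the claimed H.D.

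The main obstacle is ensuring that Observation~\ref{obs3} remains applicable throughout step~\ref{s4}: \emph{a priori}, earlier merges could have relabeled the cube-local edges of a cube that is about to be merged, invalidating the observation. Part~(2) is precisely what averts this, since vertex-disjointness implies edge-disjointness (every cube-edge has both endpoints in its own cube), so each cube retains its original type-I labeling until its own merge is performed.
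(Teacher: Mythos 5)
Your parts (1) and (2) are fine and are essentially the paper's own arguments (your ``sum of three $\pm 1$'s cannot vanish modulo $4^n$'' is just a repackaging of the paper's mod-$2$ computation). The gap is in part (3), specifically in the step ``consequently every merge joins two previously distinct components, and $c_X$ drops by exactly $1$ per merge,'' and in your closing diagnosis that vertex-disjointness of the cubes is ``precisely what averts'' failure. Edge-disjointness of the cubes only guarantees that the cube about to be merged still carries its original type-I labeling, i.e.\ that the $X$-edges it is supposed to delete are really present in the current cycles. It does not guarantee that the deletion-plus-insertion fuses two cycles into one. The merge at a cube with origin third coordinate $z$ removes one $X$-edge in the lower plane $Z^n_z$ but two $X$-edges in the upper plane $Z^n_{z+1}$ and re-attaches the loose ends; whether the outcome is one long cycle or again two cycles depends on the cyclic order in which the current cycle through $Z^n_{z+1}$ visits the four cube vertices lying in that plane. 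If that order had been scrambled by an earlier merge, the switch could close off a short cycle and leave $c_X$ unchanged even though the planes $z$ and $z+1$ lie in distinct components of your plane graph $G_X^\ast$. So the forest argument alone does not yield the drop by $1$.

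What closes the loop --- and what the paper's proof makes explicit --- is that the traversal order of $Z^n_{z+1}$ is still the original one at the moment its cube is merged: because the merging set has pairwise distinct third coordinates, each plane is the upper plane of at most one chosen cube and the lower plane of at most one, and by Observation~\ref{obs3} the unique earlier merge that can touch $Z^n_{z+1}$ (the one having it as its lower plane) preserves its necklace order. Hence the induction hypothesis must carry this order-preservation statement, not merely ``every $X$-component is a union of whole planes,'' and the success of each individual merge should be deduced from it, as in Lemma~\ref{l3}; the paper sidesteps the bookkeeping over arbitrary orders by merging in decreasing third coordinate. Your sketch contains the right ingredients (Observation~\ref{obs3}, disjointness, the plane graph), but as written the necklace-order fact is never fed back into the argument that a merge actually reduces $c_X$, and the stated ``main obstacle'' is not the real one.
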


\begin{proof}
\noindent
\begin{enumerate}
\item This was shown in Lemma~\ref{l2}.
\item Suppose there exist $(x_1,x_2,x_3)$ and $(x'_1,x'_2,x'_3)$ in $S$ such that their corresponding type-I cubes have some vertex in common, so that for some $(i_1,i_2,i_3)$ and $(i'_1,i'_2,i'_3)$ in $\{0,1\}^3$ we have
\begin{align*} 
(x_1,x_2,x_3)+(i_1,i_2,i_3) =(x'_1,x'_2,x'_3)+(i'_1,i'_2,i'_3) \  \pmod{4^n}.
\end{align*} 
It follows that $x_r+i_r=x'_r+i'_r \pmod{4^n}$ for each $1 \le r \le 3$.
Adding these congruences we get $x_1 + x_2 + x_3+ i_1 + i_2 + i_3 =x'_1 + x'_2 + x'_3 + i'_1 + i'_2 + i'_3 \  \pmod{4^n}$, but $x_1+x_2+x_3=x'_1+x'_2+x'_3=-1 \pmod{4^n}$, so we obtain $i_1+i_2+i_3=i'_1+i'_2+i'_3 \pmod{4^n}$, and in particular, $i_1+i_2+i_3=i'_1+i'_2+i'_3 \pmod{2}$.
This implies that $i_r=i'_r$ for some $r$, meaning that $x_r=x'_r \pmod{4^n}$ for the same $r$.
This gives $x_r=x'_r$, which contradicts the assumption that $S$ is a merging set.
\item Due to the symmetry involved in (\ref{ps3}), it suffices to prove the assertions in just one direction, that is, to prove
\begin{itemize}
\item The vertices of $Z^n_i$ remain in the same component of $X$, and
\item Every merge operation reduces $c_X$ by 1.
\end{itemize}
Without loss of generality, suppose that $S = \{ v_0, v_1, \ldots, v_{4^n-2} \} $, where $v_i = (x_i,y_i, i)$, and let $L_i$ be the type-I cube with origin at $v_i$.
Because of (2), the order in which we merge the cubes does not matter, so for the sake of simplicity, assume that $L_{4^n-2}$ is merged first, $L_{4^n-3}$ is merged next, and so on.

We proceed by induction.
The base case is satisfied due to Obsevation~\ref{obs2} and Lemma~\ref{l3}.
Suppose that we have merged cubes $L_{4^n-2}$ to $L_i$.
The induction hypothesis states that we have cycles $X \cap Z^n_{1}, X \cap Z^n_{2}, \ldots, X \cap Z^n_{i-1}$, and a long cycle $X \cap Z^n_{[i,4^n-1]}$.
It also states that the vertices of $Z^n_i$ have been in the same component of $X$ together throughout step \ref{s4}.
By Observation~\ref{obs3}, the order of the vertices in $Z^n_i$ has not changed yet, so merging $L_{i-1}$ will combine $X \cap Z^n_{[i,4^n-1]}$ and $X \cap Z^n_{i-1}$ into a single cycle $X \cap Z^n_{[i-1,4^n-1]}$.
It is clear that the vertices of $Z^n_{i}$ have remained and will remain in the same component of $X$.
\end{enumerate}

\end{proof}


\subsection{Deriving an H.D. for $Q_{6n}$ from an H.D. for $Q_{2n}$}\hspace*{\fill}

Combining the Hamilton decompositions for $Q_{2n}$ and $G_{n,3}$ is very similar to the 2-dimensional case.
We think of $G_{n,3}$ as a 3-dimensional cyclic grid, and assign coordinates like $(x,y,z)$ to its vertices.
\begin{definition}
Let $E$ be a directed Hamilton cycle in $Q_{2n}$.
A \emph{3-dimensional seating} of $Q_{6n}$ onto $G_{n,3}$ via $E$, is a representation of the vertices of $Q_{6n}$ by assigning them integral coordinates as follows:
\begin{enumerate}
    \item Consider $E$ and its positive direction.
    Assign $0$ to $\bm{0}$, assign $1$ to the next vertex in $E$, and continue until $4^n-1$ is assigned to the last vertex of $E$.
    \item Induce the order of $E$ onto $Q_{2n}$, so that each vertex has the same order in either graph. \label{d32}
    \item Using the coordinates in (\ref{d32}), assign coordinates to every member of $Q_{6n} = Q_{2n} \Box Q_{2n} \Box Q_{2n}$.
    Put the vertices on the 3-dimensional grid using their coordinates.
\end{enumerate}
Using the natural order of $E$, we have mapped the vertices of $Q_{6n}$ onto $G_{n,3}$ and recognized $Q_{6n}$ as a supergraph of $G_{n,3}$.
If $H$ is a directed Hamilton cycle in $G_{n,3}$, the \emph{3-dimensional directed Hamilton cycle derived from $E$ and $H$}, denoted by $g(E,H)$, is a Hamilton cycle in $Q_{6n}$ and is defined in the natural way:
\begin{enumerate}
\item 3-dimensionally seat $Q_{6n}$ onto $G_{n,3}$ via $E$.
\item $Q_{6n}$ has $3n$ axes $1,2, \ldots , 3n$, while $G_{n,3}$ has an $x$-axis, a $y$-axis, and a $z$-axis.
The axes $1,2, \ldots , n$ are in direction $x$, the axes $n+1,n+2, \ldots , 2n$ are in direction $y$, and the axes $2n+1,2n+2, \ldots , 3n$ are in direction $z$.
\item $g(E,H)$ has the same edges in the supergraph $Q_{2n} \Box Q_{2n} \Box Q_{2n}$ as $H$ has in the subgraph $G_{n,3}$.
\end{enumerate}

\end{definition}

\begin{lemma}
\label{l5}
Let $H_1$ and $H_2$ be two disjoint Hamilton cycles in  $G_{n,3}$ and $E_1$ and $E_2$ be two disjoint Hamilton cycles in $Q_{2n}$. Then the four Hamilton cycles $G_1 = g(E_1 , H_1)$, $G_2 = g(E_1,H_2)$, $G_3 = g(E_2 , H_1)$, and $G_4 = g(E_2 , H_2)$ in $Q_{6n}$ are pairwise disjoint.
\end{lemma}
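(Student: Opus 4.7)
The plan is to repeat, essentially verbatim, the proof strategy of Lemma~\ref{l1}, lifting it one dimension. The setup is completely symmetric under swapping $E_1 \leftrightarrow E_2$ and under swapping $H_1 \leftrightarrow H_2$, so it suffices to show that $G_1 = g(E_1, H_1)$ is disjoint from each of $G_2$, $G_3$, $G_4$; the remaining pair $G_3 \cap G_4 = \emptyset$ then follows by applying the same argument with the roles of $E_1$ and $E_2$ exchanged (and similarly for the pairs $(G_2, G_3)$ and $(G_2, G_4)$).

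The main move is to 3-dimensionally seat $Q_{6n}$ onto $G_{n,3}$ via $E_1$. Under this seating, an edge of $Q_{6n}$ is a grid edge (i.e., belongs to $G_{n,3}$) if and only if the edge of $Q_{2n}$ it realizes in its active copy is one of the edges of $E_1$. Consequently, $G_1 = g(E_1, H_1)$ and $G_2 = g(E_1, H_2)$, being built by tracing $H_1$ and $H_2$ through the $E_1$-seating, consist entirely of grid edges. On the other hand, $G_3 = g(E_2, H_1)$ and $G_4 = g(E_2, H_2)$ are built by tracing along $E_2$-steps in each active copy; since $E_1$ and $E_2$ are edge-disjoint in $Q_{2n}$, every edge of $G_3$ and $G_4$ is off the grid under this $E_1$-seating.

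From this dichotomy, disjointness of $G_1$ from $G_3$ and $G_4$ is immediate: one family uses only grid edges and the other only off-grid edges. For $G_1$ versus $G_2$, both sit on the grid, but on the grid they are precisely the images of $H_1$ and $H_2$ in $G_{n,3}$, which are disjoint by hypothesis, so $G_1$ and $G_2$ are disjoint as well. This completes the three required checks, and the remaining pairs follow from the symmetry noted at the outset.

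There is no real obstacle here beyond making sure the bookkeeping of the seating is spelled out correctly: one must just observe that the edges of $Q_{6n}$ that appear as grid edges under the $E_1$-seating depend only on the choice of $E_1$ (and not on $H_1$ or $H_2$), so swapping $H_1$ for $H_2$ keeps us on the grid, while swapping $E_1$ for $E_2$ moves us entirely off of it.
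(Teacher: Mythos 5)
Your proposal is correct and follows essentially the same argument as the paper: seat $Q_{6n}$ onto $G_{n,3}$ via $E_1$, observe that $G_1$ and $G_2$ lie entirely on the grid while $G_3$ and $G_4$ lie entirely off it (since $E_1$ and $E_2$ are edge-disjoint), and then use the disjointness of $H_1$ and $H_2$ for the remaining pair. Your added remark spelling out \emph{why} the $E_2$-derived cycles are off the grid is a detail the paper leaves implicit, but the route is the same.
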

\begin{proof}
We show that $G_1 = g(E_1, H_1)$ is disjoint from the other three cycles $G_2$, $G_3$, and $G_4$.
To achieve this, we 3-dimensionally seat $Q_{6n}$ onto $G_{n,3}$ via $E_1$.
Similar to the $2$-dimensional case, $G_1$ and $G_2$ have all their edges on the grid, while $G_3$ and $G_4$ have all their edges off the grid.
Thus $G_1$ is disjoint from $G_3$ and $G_4$.
Furthermore, $G_1$ and $G_2$ represent $H_1$ and $H_2$, respectively, and $H_1$ and $H_2$ are disjoint, so $G_1$ and $G_2$ must be disjoint as well.
\end{proof}

\begin{corollary} \label{c2}

If $\{ H_1, H_2, H_3 \} $ is an H.D. for $G_{n,3}$ and $\{ E_1 , E_2 , \ldots , E_{n}  \} $ is an H.D. for $Q_{2n}$, then the family $\{ g(E_i , H_j) \mid 1 \le i \le n, 1 \le j \le 3 \}$ is an H.D. for $Q_{6n}$.
The new Hamilton cycles are named $F_1$, $F_2$, \ldots, $F_{3n}$ via $F_{j} = f(E_j,H_1)$, $F_{j+n} = f(E_j,H_2)$, and $F_{j+2n} = f(E_j,H_3)$ for $1 \le j \le n$.
\end{corollary}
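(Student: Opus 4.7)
The plan is to mimic the (implicit) proof of Corollary~\ref{c1}, replacing Lemma~\ref{l1} by Lemma~\ref{l5}. First I note that the collection $\{g(E_i, H_j) \mid 1 \le i \le n,\ 1 \le j \le 3\}$ consists of $3n$ Hamilton cycles in the $6n$-regular graph $Q_{6n}$; since an H.D.\ of $Q_{6n}$ must contain exactly $3n$ cycles, it is enough to prove that the members of this family are pairwise edge-disjoint.

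Given two distinct indexed cycles $g(E_i, H_j)$ and $g(E_{i'}, H_{j'})$, I would split into two cases according to whether $i = i'$.

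\textbf{Case 1: $i = i'$ and $j \ne j'$.} Here I would 3-dimensionally seat $Q_{6n}$ onto $G_{n,3}$ via $E_i$. Both cycles then lie entirely on the grid copy of $G_{n,3}$ inside $Q_{6n}$ and represent the distinct Hamilton cycles $H_j$ and $H_{j'}$ of $G_{n,3}$, which are disjoint by hypothesis. This forces the two derived cycles to be edge-disjoint as well (this is exactly the ``$G_1$ disjoint from $G_2$'' argument from the proof of Lemma~\ref{l5}).

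\textbf{Case 2: $i \ne i'$.} Now $E_i$ and $E_{i'}$ are two disjoint Hamilton cycles in $Q_{2n}$, so Lemma~\ref{l5} is available. If in addition $j \ne j'$, the lemma applied with $(E_1, E_2, H_1, H_2) = (E_i, E_{i'}, H_j, H_{j'})$ shows directly that $g(E_i, H_j) = G_1$ and $g(E_{i'}, H_{j'}) = G_4$ are disjoint. If instead $j = j'$, I would pick any third cycle $H_k$ with $k \ne j$ (such a cycle exists because $\{H_1, H_2, H_3\}$ has three members), apply Lemma~\ref{l5} with $(H_1, H_2) = (H_j, H_k)$ and $(E_1, E_2) = (E_i, E_{i'})$, and read off that $g(E_i, H_j) = G_1$ and $g(E_{i'}, H_j) = G_3$ are disjoint.

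No step here should be a real obstruction; the essential content is already encoded in Lemma~\ref{l5}, and the corollary amounts to a short case analysis plus an edge count. The only mild subtlety is the subcase of Case~2 where $j = j'$, in which one has to invoke a third Hamilton cycle from the H.D.\ of $G_{n,3}$ in order to fit the hypotheses of Lemma~\ref{l5}; this is exactly where the fact that $G_{n,3}$ is handled via a three-cycle decomposition (as opposed to the two-cycle decomposition used for $G_{n,2}$) is convenient.
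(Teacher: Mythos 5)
Your proposal is correct and is essentially the argument the paper intends: the corollary is stated as an immediate consequence of Lemma~\ref{l5}, obtained exactly as you do by checking pairwise edge-disjointness case by case (reusing the seating argument when the $E$-index coincides, and invoking the lemma otherwise) and then counting that $3n$ edge-disjoint Hamilton cycles exhaust the $6n$-regular graph $Q_{6n}$. Your handling of the two boundary subcases (same $E_i$, and same $H_j$ with distinct $E$'s) is a careful and valid filling-in of details the paper leaves implicit.
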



\subsection{An algorithm for computing an H.D. for $G_{n,3}$}\hspace*{\fill}

In section~\ref{sec41} we saw how to derive a Hamilton decomposition $\{X, Y, Z\}$ from the initial partitioning given by Lemma~\ref{l2}.
We now give an algorithm to compute $X$.
Algorithms for $Y$ and $Z$ are similar.

The idea is to apply the edge decomposition given in Lemma~\ref{l2}, and then proceed from the origin, initially moving in the positive direction of $X$, until we reach a chosen type-I cube (one whose origin belongs to the merging set).
We then recognize the special vertex, take the necessary actions mandated by the merge operation, and continue to walk in $X$.
Figure~\ref{p5} shows all the special vertices and the reasoning behind our actions.
For example, if we reach $m'$ and the current direction is negative, it means that we came from outside of the cube (and not from $m$), so we should go to $m$ and change the direction to positive, so that we move outside in the next step.
If we reach $m'$ and the current direction is positive, however, it means that we came from $m$ (and not from outside), so we should move outside and leave the direction unchanged.

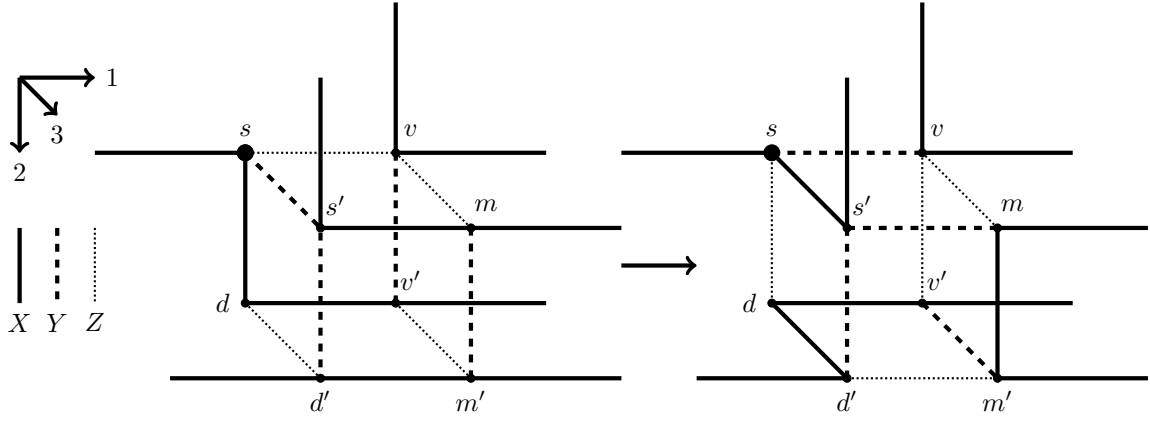
\begin{figure}[H]
\centering

\begin{tikzpicture}
\foreach \i in {0,2}{
	\foreach \j in {1,3}{
		\foreach \k in {0,1}{
		        \node[vertex, fill, minimum size = 3pt] at (\i + \k, \j - \k) {};
      		        }
	}
}
\foreach \i in {7,9}{
	\foreach \j in {1,3}{
		\foreach \k in {0,1}{
		        \node[vertex, fill, minimum size = 3pt] at (\i + \k, \j - \k) {};
      		        }
	}
}
\node[vertex, fill, minimum size = 6pt] at (0, 3) {};
\node[vertex, fill, minimum size = 6pt] at (7, 3) {};

\node[vertex, fill, minimum size = 6pt] at (0, 3) {};    
\draw[->,ultra thick] (-3,4)--(-2,4) node[right]{$1$};
\draw[->,ultra thick] (-3,4)--(-3,3) node[below]{$2$};
\draw[->,ultra thick] (-3,4)--(-2.5,3.5) node[below]{$3$};

\draw[solid,ultra thick] (-3,2)--(-3,1) node[below]{$X$};
\draw[dashed,ultra thick] (-2.5,2)--(-2.5,1) node[below]{$Y$};
\draw[densely dotted,thick] (-2,2)--(-2,1) node[below]{$Z$};

\draw[solid,ultra thick] (0,3)--(0,1)--(2,1);
\draw[solid,ultra thick] (1,0)--(3,0);
\draw[solid,ultra thick] (1,2)--(3,2);

\draw[dashed,ultra thick] (0,3)--(1,2)--(1,0);
\draw[dashed,ultra thick] (2,3)--(2,1);
\draw[dashed,ultra thick] (3,0)--(3,2);

\draw[densely dotted, thick] (3,2)--(2,3)--(0,3);
\draw[densely dotted, thick] (0,1)--(1,0);
\draw[densely dotted, thick] (2,1)--(3,0);

\foreach \i in {0,7}{
\draw[solid,ultra thick] ( \i +0,3)--( \i +-2,3);
\draw[solid,ultra thick] ( \i +1,0)--( \i +-1,0);
\draw[solid,ultra thick] ( \i +1,2)--( \i +1,4);
\draw[solid,ultra thick] ( \i +2,3)--( \i +2,5);
\draw[solid,ultra thick] ( \i +2,3)--( \i +4,3);
\draw[solid,ultra thick] ( \i +3,2)--( \i +5,2);
\draw[solid,ultra thick] ( \i +2,1)--( \i +4,1);
\draw[solid,ultra thick] ( \i +3,0)--( \i +5,0);
}

\draw[->,ultra thick] (5,1.5)--(6,1.5) ;

\foreach \i in {2}{
\draw[densely dotted, thick] ( \i +5,3)--( \i +5,1);
\draw[solid,ultra thick] ( \i +5,1)--( \i +7,1);
\draw[densely dotted, thick] ( \i +6,0)--( \i +8,0);
\draw[dashed,ultra thick] ( \i +6,2)--( \i +8,2);

\draw[solid,ultra thick] ( \i +5,3)--( \i +6,2);
\draw[dashed,ultra thick]( \i +6,2)--( \i +6,0);
\draw[densely dotted, thick] ( \i +7,3)--( \i +7,1);
\draw[solid,ultra thick] ( \i +8,0)--( \i +8,2);

\draw[densely dotted, thick] ( \i +8,2)--( \i +7,3);
\draw[dashed,ultra thick] ( \i +7,3)--( \i +5,3);
\draw[solid,ultra thick] ( \i +5,1)--( \i +6,0);
\draw[dashed,ultra thick] ( \i +7,1)--( \i +8,0);
}

\foreach \i in {0,7}{
\node at ( \i +0, 3.3) {$s$};
\node at ( \i +-.3, 1) {$d$};
\node at ( \i +1.2, 2.3) {$s'$};
\node at ( \i +1, -.3) {$d'$};
\node at ( \i +3.2, 2.3) {$m$};
\node at ( \i +3, -.3) {$m'$};

\node at ( \i +2.2, 3.3) {$v$};
\node at ( \i +2.2, 1.3) {$v'$};
}

\end{tikzpicture}

\caption{A merge operation together with the attached $X$-edges.
The above vertex labelling conforms to that of Algorithm~\ref{a2}.}\label{p5}
\end{figure}

We choose the merging set to be
\begin{align*}
S & = \Bigg\{ \left( 0,0,4^t-1 \right), \left( 1,1,4^t-3 \right), \ldots ,  \left( \frac{4^t}{2}-1, \frac{4^t}{2}-1 ,1 \right), \\ 
& \left( \frac{4^t}{2}, \frac{4^t}{2}+1 , 4^t-2 \right), \left( \frac{4^t}{2}+1,\frac{4^t}{2}+2, 4^t-4 \right), \ldots, \left( 4^t-2, 4^t-1,2 \right) \Bigg\}.
\end{align*}
We choose $S$ like this for two reasons:
\begin{itemize}
\item The origin does not belong to any of the type-I cubes, so we do not need an initial case check.
\item $S$ has all the $x$-coordinates from $0$ to $4^n-2$, so it is easy to check if a coordinate belongs to it.
\end{itemize}
We define five helping sets $S'$, $D$, $D'$, $M$, and $M'$ so that we have instant access to all the special vertices.
Algorithm~\ref{a2} given in Appendix~\ref{ap2} calculates $X$.

\subsection{3-Dimensional Algorithm}\hspace*{\fill}

Just like in the 2-dimensional case, we use the definition of $g(E,H)$ to devise an algorithm for computing an H.D. for $Q_{6n}$.
Algorithm~\ref{a3} is very similar to Algorithm~\ref{a1}, and is given in Appendix~\ref{ap2}.


\section{Highly Symmetric Hamilton Decompositions}
The theory we have developed in the previous chapters can be improved to give us highly symmetric Hamilton decompositions.
Let $\sigma : [1,k] \to [1,k]$ be a permutation.
Then $\sigma$ induces a homomorphism of $G_{n,k}$ by relabelling the axes: The axis previously referred to as $i$ is now called $\sigma(i)$.
More specifically, the vertex $v = (x_1, x_2, \ldots, v_k)$ is mapped to $\sigma(v) = (x_{\sigma^{-1}(1)}, x_{\sigma^{-1}(2)}, \ldots, x_{\sigma^{-1}(k)})$.
As $\sigma$ is a homomorphism, it maps Hamilton cycles to Hamilton cycles.
If $H = e_1 e_2 \ldots e_{4^{nk}}$ is a directed Hamilton cycle in $G_{n,k}$, then $\sigma(H)$ is the Hamilton cycle
\begin{align*}
\sigma(e_1) \sigma(e_2) \ldots \sigma(e_{4^{nk}})
\end{align*}
Note that $\sigma$ maps backward edges to backward edges: If $\sigma(i) = j$, then $\sigma(\overline{i}) = \overline{j}$.
It is worth remembering that $\overline{i}$ stands for an edge from $(x_1, x_2, \ldots, x_{i-1}, x_i, x_{i+1}, \ldots, x_n)$ to $(x_1, x_2, \ldots, x_{i-1}, x_i-1 \pmod{4^n}, x_{i+1}, \ldots, x_n)$.
\begin{definition}
A family $\mathscr{S} = \{ \sigma_1 , \sigma_2 , \ldots , \sigma_k \}$ of $k$ permutations on $[1,k]$ is called a \emph{Latin family} if the matrix $m_{ij} = \sigma_i(j)$ is a Latin square.
We do not differentiate between $\sigma_i$ and the {\it$i$th} row of the matrix.
For the sake of simplicity, we require that $\sigma_1$, the first row of the matrix, is the identity.

Let $T = \{ H_1 , H_2 , \ldots , H_k \} $ be an H.D. for $G_{n,k}$. We say that $T$ is a \emph{Latin Hamilton decomposition} if there exists a Hamilton cycle $H$ in $G_{n,k}$ and a Latin family $\mathscr{S} = \{ \sigma_1 , \sigma_2 \ldots , \sigma_k \}$ of permutations on $[1,k]$ such that
\begin{align*}
H_i = \sigma_i(H) \qquad 1 \le i \le k
\end{align*}
The Hamilton cycle $H$ ($= H_1$)  is then called a \emph{source cycle} for $G_{n,k}$ and the matrix $m_{ij} = \sigma_i(j)$ is called a \emph{source matrix} for $G_{n,k}$.
The pair $(H,M)$ is called a \emph{source pair} for $G_{n,k}$.
\end{definition}

The H.D. given for $G_{n,2}$ in~\ref{sec31} is Latin, but the one given for $G_{n,3}$ in~\ref{sec41} is not necessarily so.
If it is not Latin, we can turn it into one with a small adjustment.

\begin{theorem} \label{t1}
The set $S$ mentioned in Lemma~\ref{l4} step~\ref{s2} can be chosen in such a way that the resulting H.D. is Latin. More specifically, if
\allowdisplaybreaks
\begin{align*}
& S^* = \Bigg\{ \left( 0, \frac{4^t}{2} -1 , \frac{4^t}{2} \right) ,  \left(1, \frac{4^t}{2} -3 , \frac{4^t}{2} +1\right) , \ldots , \left( \frac{4^t -4}{6} , \frac{4^t +2}{6}, \frac{4 \times 4^t -4}{6} \right) , \\
& \left( \frac{4 \times 4^t +2}{6} , \frac{5 \times 4^t +4}{6} , \frac{4^t}{2} -2 \right) ,  \left( \frac{4 \times 4^t +2}{6} +1 , \frac{5 \times 4^t +4}{6} +1 , \frac{4^t}{2} -4 \right) , \ldots , \\
& \left( \frac{5 \times 4^t -2}{6} -1 , 4^t -1 , \frac{4^t +2}{6} +1\right) \Bigg\}
\end{align*}
and
\begin{align}\label{f3}
S = \left\{ \left(x,y,z\right) \bigm\vert \left(x,y,z\right) \in S^*, \text{ or } \left(y,z,x\right)  \in S^*, \text{ or } \left(z,x,y\right) \in S^* \right\},
\end{align}
then the resulting H.D. is Latin.
\end{theorem}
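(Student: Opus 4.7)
The plan is to exploit cyclic symmetry: I will show that the construction produces an H.D.\ invariant under the coordinate rotation, from which the Latin structure with the circulant $3 \times 3$ Latin family follows for free. Let $\sigma = (1\,2\,3)$ and let $\tau$ be its induced homomorphism of $G_{n,3}$, so that $\tau(x,y,z) = (z,x,y)$ and $\tau$ carries a direction-$d$ edge to a direction-$\sigma(d)$ edge. My first step is to show that the three-part partition of Lemma~\ref{l2} is already cyclically symmetric, i.e., $\tau(X) = Y$, $\tau(Y) = Z$, $\tau(Z) = X$. This is immediate: the predicate $x+y+z \equiv -1 \pmod{4^n}$ is $\tau$-invariant, and the class assignments for directions $1, 2, 3$ cycle through $X, Y, Z$ consistently.

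Next I would verify that $\tau$ permutes type-I cubes by $v \mapsto \tau(v)$ on origins, and that the merge operation commutes with $\tau$ up to the cycling of labels; this is a direct check using Figure~\ref{p2}, since both type-I and type-II cubes are themselves cyclically patterned. It follows that if the merging set $S$ satisfies $\tau(S) = S$, then after performing all merges the final H.D.\ still satisfies $\tau(X) = Y$, $\tau(Y) = Z$, $\tau(Z) = X$. Granting this, I would conclude the Latin property by setting $H := X$, $\sigma_1 = \mathrm{id}$, $\sigma_2 = (1\,2\,3)$, $\sigma_3 = (1\,3\,2)$; the matrix $m_{ij} = \sigma_i(j)$ is the standard circulant Latin square, and $\sigma_2(H) = \tau(X) = Y$, $\sigma_3(H) = \tau^2(X) = Z$, so $(H, M)$ is a source pair.

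It then remains only to verify that the specific $S$ given by~(\ref{f3}) is a $\tau$-invariant merging set. Invariance under $\tau$ is built into the definition~(\ref{f3}). For the merging-set axioms, I would directly check that each $(x,y,z) \in S^*$ satisfies $x+y+z \equiv -1 \pmod{4^n}$ by substitution (the coordinates in both halves are designed so the sum telescopes), count $|S^*| = (4^n+2)/6 + (4^n-4)/6 = (4^n-1)/3$, and then verify that the three $\tau$-orbits $S^*$, $\tau(S^*)$, $\tau^2(S^*)$ are pairwise disjoint, so that $|S| = 4^n-1$, and that across all of $S$ the $x$-, $y$-, and $z$-coordinates are each pairwise distinct.

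The main obstacle is this last coordinate bookkeeping. The ranges defining $S^*$ (two arithmetic progressions of length roughly $4^n/6$) and their two $\tau$-rotations are engineered to partition $\{0, 1, \ldots, 4^n - 2\}$ in each coordinate slot, but checking this rigorously requires case analysis on whether an index falls in the upper or lower half of $S^*$ and in which rotation, together with careful modular arithmetic modulo $4^n$. All the symmetry reasoning above is routine by contrast.
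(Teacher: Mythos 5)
Your proposal is correct and follows essentially the same route as the paper: both arguments exploit the cyclic rotation $\tau(x,y,z)=(z,x,y)$, observe that the initial decomposition of Lemma~\ref{l2} satisfies $\tau(X)=Y$ and $\tau(Y)=Z$, show by induction that merging a $\tau$-orbit of type-I cubes preserves this equivariance, and conclude that $X$ is a source cycle with the circulant $3\times 3$ source matrix. The only difference is that you explicitly flag the remaining bookkeeping (verifying that the displayed $S$ is in fact a merging set), a check the paper's proof also leaves implicit.
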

\begin{proof}
We show that it suffices for $S$ to have the following property:
\begin{center}
If $(x,y,z) \in S$, then $(y,z,x) \in S$ and $(z,x,y) \in S$.
\end{center}
To see this, consider $G_{n,3}$ after completion of Lemma~\ref{l4} step~\ref{s1}.
Let $\sigma_i: [1,3] \to [1,3]$ be defined via $\sigma_i(j) = i + j -1 \pmod{3}$ for $i$ and $j$ in $[1,3]$.
It is not hard to see that
\begin{align}\label{f4}
\sigma_2(X) = Y \text{ and } \sigma_3(X) = Z.
\end{align}
We wish to show that the relations given in~\ref{f4} remain valid after completion of Lemma~\ref{l4} step~\ref{s4}.
To achieve this, we merge the cubes three at a time and use induction.

Suppose that $u_1 = (x,y,z)$, $u_2 = \sigma_2(u_1) = (z,x,y)$, and $u_3 = \sigma_3(u_1) = (y,z,x)$ belong to $S$, and let $L_1$, $L_2$, and $L_3$ be type-I cubes with their origins at $u_1$, $u_2$, and $u_3$, respectively.
By the induction hypothesis, we know that~\ref{f4} is valid before merging $L_1$, $L_2$, and $L_3$.

Since $u_2 = \sigma_2(u_1)$, we have $L_2 = \sigma_2(L_1)$, and because $u_3 = \sigma_3(u_1)$, we get $L_3 = \sigma_3(L_1)$.
Furthermore, analyzing the merge operator gives $\sigma_2 \left( L_1 \cap X \right) = L_2 \cap Y$ and $\sigma_3 \left( L_1 \cap X \right) = L_3 \cap Z$.
This means that~\ref{f4} is valid after merging the three cubes.
Therefore $X$ (after finishing Lemma~\ref{l4} step~\ref{s4}) is a source cycle for $G_{n,3}$ in the H.D. $\{X,Y,Z \}$, and its source matrix is
$ \left[ \begin{array}{c|c|c} 1 & 2 & 3 \\ \hline 2 & 3 & 1 \\ \hline 3 & 1 & 2 \end{array} \right]$.
\end{proof}

We may modify Algorithm~\ref{a1} to take source pairs for $Q_{2n}$ and $G_{n,2}$ and produce a source pair for $Q_{4n}$.
We may also modify Algorithm~\ref{a3} to take source pairs for $Q_{2n}$ and $G_{n,3}$ and produce a source pair for $Q_{6n}$.
Algorithms~\ref{a4} and~\ref{a5} are the Latin counterparts to Algorithms~\ref{a1} and~\ref{a3}, respectively, and are given in Appendix~\ref{ap2}.
We may also specify that Algorithm~\ref{a2} takes a suitable merging set (\ref{f3}) so that it produces a source cycle for $G_{n,3}$.
Hence, it is not necessary to give a Latin counterpart to Algorithm~\ref{a2}.

\begin{theorem} \label{t2}
If $ \{ H_1 , H_2  \}$ and $\{ E_1 , E_2 , \ldots , E_n \}$ mentioned in Corollary~\ref{c1} are Latin, then the resulting Hamilton decomposition $ \{ f(E_i , H_j) \mid 1 \le i \le n \text{ and } 1 \le j \le 2 \} $ is also Latin.
\end{theorem}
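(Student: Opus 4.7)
The plan is to lift the source pairs of the two input Hamilton decompositions to a source pair for the output H.D.\ of $Q_{4n}$. Let $E_1$ be a source cycle for $\{E_1,\ldots,E_n\}$ with Latin family $\{\rho_1=\mathrm{id},\rho_2,\ldots,\rho_n\}$ on $[1,n]$, so that $E_i=\rho_i(E_1)$; and let $H_1$ be a source cycle for $\{H_1,H_2\}$ with Latin family $\{\tau_1=\mathrm{id},\tau_2=(1\,2)\}$ on $[1,2]$, so that $H_j=\tau_j(H_1)$. I propose the source cycle $F:=f(E_1,H_1)$. For each permutation $\rho$ of $[1,n]$, let $\hat\rho$ be the permutation of $[1,2n]$ acting as $\rho$ on $[1,n]$ and as $k\mapsto n+\rho(k-n)$ on $[n+1,2n]$; for $\tau\in\{\mathrm{id},(1\,2)\}$ let $\hat\tau$ be the identity or the block-swap $k\leftrightarrow k+n$, respectively. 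Define the $2n$ permutations $\pi_{ij}:=\hat\rho_i\hat\tau_j$, indexed by $(i,j)\in[1,n]\times[1,2]$, and note that $\pi_{1,1}=\mathrm{id}$.

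The heart of the argument is two naturality identities for $f$: (A) $f(\rho(E),H)=\hat\rho\bigl(f(E,H)\bigr)$ for every permutation $\rho$ of $[1,n]$, and (B) $f(E,\tau(H))=\hat\tau\bigl(f(E,H)\bigr)$ for every permutation $\tau$ of $[1,2]$. For (A), seating $Q_{4n}$ onto $G_{n,2}$ via $\rho(E)$ sends the integer coordinate $k$ of $G_{n,2}$ to $\rho(v_k)$, where $v_k$ was assigned under the $E$-seating; since the replacement happens independently in each of the two $Q_{2n}$ factors and $\hat\rho$ is exactly the map $(u,w)\mapsto(\rho(u),\rho(w))$ on $Q_{2n}\Box Q_{2n}$, the $\rho(E)$-seating equals $\hat\rho$ composed with the $E$-seating, and tracing $H$ through the new seating produces $\hat\rho\bigl(f(E,H)\bigr)$. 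For (B), the homomorphism $\tau$ of $G_{n,2}$ swaps the two coordinate axes, so $\tau(H)$ visits $(w_t,u_t)$ whenever $H$ visits $(u_t,w_t)$; after $E$-seating, this swap of $G_{n,2}$-coordinates is precisely the swap of the two $Q_{2n}$ factors in $Q_{4n}$, which is exactly $\hat\tau$. Composing (A) and (B) yields $f(E_i,H_j)=\hat\rho_i\hat\tau_j(F)=\pi_{ij}(F)$.

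It remains to verify that $\mathscr{S}:=\{\pi_{ij}\}_{(i,j)\in[1,n]\times[1,2]}$ is a Latin family on $[1,2n]$. Fix any $\ell\in[1,2n]$. For each fixed $j$, the value $\hat\tau_j(\ell)$ lies in exactly one of the two blocks $[1,n]$ and $[n+1,2n]$, and the two choices of $j$ place it in different blocks. Within whichever block contains $\hat\tau_j(\ell)$, $\hat\rho_i$ acts (up to the shift by $n$) as $\rho_i$ on $[1,n]$; since $\{\rho_1,\ldots,\rho_n\}$ is Latin, as $i$ varies over $[1,n]$ the values $\pi_{ij}(\ell)$ sweep out that entire block. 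Unioning over both values of $j$ covers all of $[1,2n]$, so the column indexed by $\ell$ of the associated matrix contains each entry of $[1,2n]$ exactly once. Hence $\mathscr{S}$ is Latin, and $(F,\mathscr{S})$ is a source pair for the derived H.D.

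The main technical obstacle is the precise formulation of the naturality identities (A) and (B), which requires carefully unwinding the definition of the seating and verifying that the induced homomorphism of $Q_{4n}$ is exactly $\hat\rho$ and $\hat\tau$, respectively; once these are secured, the construction of the source pair and the Latin square verification are straightforward.
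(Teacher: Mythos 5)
Your proof is correct, and it reaches the same conclusion as the paper (indeed, unwinding your Latin family $\{\hat\rho_i\hat\tau_j\}$ recovers exactly the paper's source matrix $M'=\left[\begin{array}{c|c} M & M+n \\ \hline M+n & M\end{array}\right]$), but it takes a genuinely different route. The paper argues through Algorithm~\ref{a1}: it defines a matrix $Q$ recording how the edge dimensions of $F_1=f(E_1,H_1)$ transform into those of the other derived cycles, and verifies $Q=M'$ by tracing specific lines of the pseudocode (lines~\ref{a1l9}, \ref{a1l10}, \ref{a1l12}, \ref{a1l23}) together with a separate check that the forward/backward pattern of the $i$th edge is uniform across all $F_j$. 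You instead isolate two equivariance identities, $f(\rho(E),H)=\hat\rho(f(E,H))$ and $f(E,\tau(H))=\hat\tau(f(E,H))$, which follow directly from the observation that seating via $\rho(E)$ is the $E$-seating precomposed with $\hat\rho^{-1}$ and that swapping the axes of $G_{n,2}$ becomes the factor swap of $Q_{2n}\Box Q_{2n}$; the Latin-square check for $\{\hat\rho_i\hat\tau_j\}$ is then a clean block computation, and directions of edges are handled automatically since the lifted maps are coordinate permutations fixing the origin. Your approach is algorithm-independent, makes the appearance of the block matrix conceptually transparent, and transfers verbatim to the three-dimensional case (Theorem~\ref{t3}) with $\hat\tau$ ranging over the cyclic block shifts; what the paper's computation buys in exchange is a simultaneous correctness certificate for the implemented Algorithm~\ref{a4}, which your argument does not provide.
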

\begin{proof}
Let $E_1$, our source cycle for $Q_{2n}$, have source matrix $M$. For $G_{n,2}$, the cycle $H_1$ is a source cycle and has source matrix 
$
\left[
\begin{array}{c|c}
1 & 2 \\ \hline
2 & 1
\end{array}\right]
$.
We show that $F_1 = f(E_1,H_1)$ is a source cycle for $Q_{4n}$ with
$M' =
\left[
\begin{array}{c|c}
M & M+n \\ \hline
M+n & M
\end{array}\right]
$
as its source matrix, where $M+n$ is obtained from $M$ by adding $n$ to every entry.

Our proof is based on Algorithm~\ref{a1}.
In Appendix \ref{ap3} it is shown that Algorithm~\ref{a1} computes $f(E,H)$ correctly.
We know that, for $1 \le j \le n$, this algorithm stores $f(E_j,H_1)$ and $f(E_j,H_2)$ as $F_j$ and $F_{j+n}$, respectively.
The dimension of the {\it$i$th} edge of $F_j$ is stored in $f[j-1][i-1][0]$ and its direction is stored in $f[j-1][i-1][1]$.
Due to line \ref{a1l9} in the algorithm and the fact that $H_1$ and $H_2$ make a Latin decomposition, for every $1 \le i \le 4^{2n}$, either all the $F_i$'s have a forward edge in the {\it$i$th} position or all the $F_i$'s have a backward edge in the {\it$i$th} position.
So the directions of the edges are as required and we only need to focus on their dimensions.

To show that the edge dimensions are as we want, we define a $2n$ by $2n$ matrix $Q$ via
\begin{align*}
q_{i,j} = t \text{ if there is some } 0 \le s < 4^{2n} \text{ such that } f[0][s][0] = j \text{ and } f[i+1][s][0] = t.
\end{align*}
We show that
\begin{itemize}
    \item $Q$ is well-defined, and
    \item $Q = M'$.
\end{itemize}
This would complete the proof of the theorem.

For $1 \le i \le 2n$, let $S_i$ be the set of edge numbers in $F_1$ with dimension $i$.
More precisely
\begin{align*}
    S_i = \{ j \vert 0 \le j < 4^{2n} \text{ and } f[0][j][0] = i \}
\end{align*}
Suppose that $1 \le v \le n$ and let $s \in S_v$.
Lines~\ref{a1l10},~\ref{a1l12}, and~\ref{a1l23} say that, for $j=0$, $i=s$, and $k=0$, we have dim~$=0$ and that for $u = c[k][\text{dim}]$ we have $f[0][s][0] = e[0][u][0]$, but $s \in S_v$, so we have $f[0][s][0] = e[0][u][0] = v = m'_{1,v} = m_{1,v}$.
Therefore, $q_{1,v}$ is well-defined and is equal to $m_{1,v}$.
Again, due to lines~\ref{a1l10},~\ref{a1l12}, and~\ref{a1l23}, for $0 \le w < n$, putting $j=w$ but keeping the same $i$ and $k$, we have the same $u$, and thus $f[w][s][0] = e[w][u][0] = m_{w+1,v}$.
This means that $q_{w+1,v}$ is well-defined as is equal to $m_{w+1,v}$.
Since $w$ and $v$ were arbitrary in $[0,n-1]$ and $[1,n]$, respectively, we get $q_{w+1,v} = m_{w+1,v}$ for $1 \le v \le n$ and $0 \le w < n$.

A similar argument for the other cases shows that
\begin{itemize}
    \item for $n+1 \le v \le 2n$ and $1 \le w \le n$ we have $q_{w,v} = m_{w,v-n} + n$,
    \item for $1 \le v \le n$ and $n+1 \le w \le 2n$ we have $q_{w,v} = m_{w-n,v} + n$, and
    \item for $n+1 \le v \le 2n$ and $n+1 \le w \le 2n$ we have $q_{w,v} = m_{w-n,v-n}$.
\end{itemize}
This shows that $Q$ is well-defined and $Q = M'$.

\end{proof}
As a corollary, we have the following important result.
\begin{corollary} \label{c3}
If $Q_{2n}$ has a source cycle, so does $Q_{4n}$.
\end{corollary}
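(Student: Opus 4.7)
The plan is to deduce this directly from Theorem~\ref{t2} by supplying it with the two required Latin decompositions: the hypothesized source cycle for $Q_{2n}$ on one side, and the explicit H.D. for $G_{n,2}$ from Section~\ref{sec31} on the other.

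First, I would verify that the decomposition $\{H_1, H_2\}$ for $G_{n,2}$ displayed in equation (\ref{f2}) is Latin. Inspecting the two strings, one sees that $H_2$ is obtained from $H_1$ by the transposition $\sigma=(1\,2)$ applied to every edge label; since $\sigma$ is a homomorphism of $G_{n,2}$ (swapping the two axes), this means $H_2 = \sigma(H_1)$. Together with the identity permutation, $\{\mathrm{id},\sigma\}$ forms a Latin family on $[1,2]$, so $H_1$ is a source cycle with source matrix $\left[\begin{array}{c|c} 1 & 2 \\ \hline 2 & 1 \end{array}\right]$, exactly the matrix used in Theorem~\ref{t2}.

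Second, by hypothesis $Q_{2n}$ has a source cycle $E_1$ with some source matrix $M$. By definition of a source cycle, the induced Latin family $\{\sigma_1,\ldots,\sigma_n\}$ produces a Latin H.D. $\{E_1,\ldots,E_n\}$ of $Q_{2n}$ with $E_i=\sigma_i(E_1)$. Thus both inputs required by Theorem~\ref{t2}, a Latin H.D. for $G_{n,2}$ and a Latin H.D. for $Q_{2n}$, are in hand.

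Applying Theorem~\ref{t2} to these inputs yields that the family $\{f(E_i,H_j)\mid 1\le i\le n,\ 1\le j\le 2\}$ is a Latin H.D. for $Q_{4n}$, with $F_1=f(E_1,H_1)$ a source cycle having source matrix $\left[\begin{array}{c|c} M & M+n \\ \hline M+n & M \end{array}\right]$. Hence $Q_{4n}$ has a source cycle, as claimed. There is no real obstacle here — the work has already been done in Theorem~\ref{t2} and in the construction of Section~\ref{sec31}; the only thing to check is that the $G_{n,2}$ decomposition in (\ref{f2}) genuinely meets the definition of Latin, which is immediate from the axis-swap symmetry visible in its description.
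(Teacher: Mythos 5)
Your proposal is correct and follows the same route the paper intends: the corollary is an immediate consequence of Theorem~\ref{t2} once one notes that the decomposition (\ref{f2}) of $G_{n,2}$ is Latin via the axis-swap $\sigma=(1\,2)$, a fact the paper records just before Theorem~\ref{t1}. Your explicit verification of that Latin property is a harmless and correct addition; nothing further is needed.
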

\begin{theorem} \label{t3}
If $\{ H_1 , H_2  , H_3 \}$ and $\{ E_1 , E_2 , \ldots , E_n \}$ mentioned in Corollary~\ref{c2} are Latin, then the resulting Hamilton decomposition $\{ g(E_i , H_j) \mid 1 \le i \le n \text{ and } 1 \le j \le 3 \} $ is also Latin.
\end{theorem}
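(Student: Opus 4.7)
The plan is to parallel the proof of Theorem~\ref{t2}, replacing its $2\times 2$ block structure with a $3\times 3$ block structure governed by the Latin family $\{\sigma_1, \sigma_2, \sigma_3\}$ of $\{H_1, H_2, H_3\}$. Let $M$ be the source matrix of $E_1$, and recall from Theorem~\ref{t1} that $H_1$ is a source cycle for $G_{n,3}$ with source matrix
\[
S_3 = \left[\begin{array}{c|c|c} 1 & 2 & 3 \\ \hline 2 & 3 & 1 \\ \hline 3 & 1 & 2 \end{array}\right].
\]
I would claim that $G_1 = g(E_1, H_1)$ is a source cycle for $Q_{6n}$ with source matrix
\[
M' = \left[\begin{array}{c|c|c} M & M+n & M+2n \\ \hline M+n & M+2n & M \\ \hline M+2n & M & M+n \end{array}\right],
\]
where $M + kn$ denotes the matrix obtained from $M$ by adding $kn$ to every entry. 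That $M'$ is a Latin square is immediate: each block $M + kn$ takes values in its own range $[kn+1, (k+1)n]$ and is itself Latin, while each block-row and each block-column of $M'$ uses $M$, $M+n$, $M+2n$ exactly once because $S_3$ is Latin.

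The verification that Algorithm~\ref{a3} actually realizes this source matrix mirrors the $2$-dimensional argument. I would define a $3n \times 3n$ matrix $Q$ by setting $q_{i,j} = t$ whenever there exists $0 \le s < 4^{3n}$ such that the $s$-th edge of $F_1$ has dimension $j$ and the $s$-th edge of $F_i$ has dimension $t$, and then check (a) $Q$ is well-defined and (b) $Q = M'$. The handling of directions is identical to the $2$-dimensional case: since $\{H_1, H_2, H_3\}$ is a Latin family, at every step $s$ all three of $H_1, H_2, H_3$ simultaneously take a forward or simultaneously take a backward edge, and so the corresponding triple $F_i, F_{i+n}, F_{i+2n}$ has the same property after substituting in the shared-direction edges of $E_i$.

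The main obstacle will be the bookkeeping for the nine block cases of $Q = M'$. The $3n$ axes of $Q_{6n}$ are partitioned by the $3$-dimensional seating into three contiguous blocks of size $n$ corresponding to the $x$-, $y$-, and $z$-axes of $G_{n,3}$. For each pair $(r,c) \in [1,3]^2$, the aim is to show $q_{w,v} = m_{w',v'} + ((S_3)_{r,c} - 1)n$ whenever $w$ and $v$ lie in the $r$-th and $c$-th axis-blocks, with $w', v' \in [1,n]$ the corresponding within-block indices. For $r = 1$ this is direct: whenever $H_1$'s $s$-th edge lies in direction $x$ (resp.\ $y$, $z$), the simultaneous $s$-th edges of $F_1, \ldots, F_n$ are supplied by a step of $E_j$ whose dimensions are offset by $0$ (resp.\ $n$, $2n$), yielding the three blocks of the first block-row of $M'$. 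For $r > 1$, the cyclic permutation $\sigma_r$ that sends $H_1$ to $H_r$ cyclically permutes the three axis-blocks, which composes with the within-block action of $E_j$ to shift the block-row of $M'$ by an extra $n$ or $2n$ exactly as dictated by $S_3$. Each of the nine entries is a mechanical extension of the corresponding lines in the proof of Theorem~\ref{t2}.

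As in the $2$-dimensional case, this immediately yields the corollary that if $Q_{2n}$ has a source cycle, then so does $Q_{6n}$.
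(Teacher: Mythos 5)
Your proposal is correct and follows essentially the same route as the paper: both reduce to the argument of Theorem~\ref{t2} via Algorithm~\ref{a3}, taking $H_1$ to have the cyclic source matrix from Theorem~\ref{t1} and showing that $g(E_1,H_1)$ is a source cycle for $Q_{6n}$ with the identical block source matrix $M'$ built from $M$, $M+n$, $M+2n$. The paper only sketches this; your added detail on the Latin property of $M'$ and the nine block cases is a faithful elaboration of the same argument.
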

\begin{proof}
The proof is very similar to that of Theorem~\ref{t2}, therefore we only sketch it here.
Based on Algorithm~\ref{a3}, if $E_1$ is a source cycle for $Q_{2n}$ with source matrix $M$, and if $H_1$ is a source cycle for $G_{n,3}$ with source matrix
$ \left[ \begin{array}{c|c|c} 1 & 2 & 3 \\ \hline 2 & 3 & 1 \\ \hline 3 & 1 & 2 \end{array} \right]$, then $g(E_1,H_1)$ is a source cycle for $Q_{6n}$ with
$M' = \left[ \begin{array}{c|c|c} M & M+n & M+2n \\ \hline M+n & M+2n & M \\ \hline M+2n & M & M+n \end{array} \right]$ as its source matrix.
\end{proof}
The last theorem gives rise to another important result:
\begin{corollary} \label{c4}
If $Q_{2n}$ has a source cycle, so does $Q_{6n}$.
\end{corollary}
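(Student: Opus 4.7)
The plan is to invoke the machinery already assembled and observe that the corollary follows by stringing three results together, with essentially no new work. Suppose $Q_{2n}$ has a source cycle $E_1$ with source matrix $M$; by definition this gives a Latin H.D.\ $\{E_1, E_2, \ldots, E_n\}$ of $Q_{2n}$ where $E_i = \sigma_i(E_1)$ for the permutations encoded in the rows of $M$. This handles one of the two Latin inputs required by Theorem~\ref{t3}.

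For the second input, I would appeal to Theorem~\ref{t1}: choosing the merging set $S$ via the symmetric construction in equation~(\ref{f3}) guarantees that the H.D.\ $\{X,Y,Z\}$ of $G_{n,3}$ produced by Lemma~\ref{l4} is Latin, with $X$ as the source cycle and source matrix $\left[\begin{array}{c|c|c} 1 & 2 & 3 \\ \hline 2 & 3 & 1 \\ \hline 3 & 1 & 2 \end{array}\right]$. Thus we have Latin H.D.s of both $Q_{2n}$ and $G_{n,3}$ available, which is exactly the hypothesis needed by Theorem~\ref{t3}.

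Now I would apply Theorem~\ref{t3} to these two Latin decompositions. The conclusion is that the family $\{g(E_i, H_j) \mid 1 \le i \le n,\ 1 \le j \le 3\}$ is a Latin H.D.\ of $Q_{6n}$, with source cycle $g(E_1, X)$ and block source matrix $M' = \left[\begin{array}{c|c|c} M & M+n & M+2n \\ \hline M+n & M+2n & M \\ \hline M+2n & M & M+n \end{array}\right]$. Since $g(E_1, X)$ is by definition a source cycle for $Q_{6n}$, the corollary follows.

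There is no real obstacle here: the hard work has already been done in Theorems~\ref{t1} and~\ref{t3}, and the corollary is merely the statement that existence of a source cycle propagates from dimension $2n$ to $6n$ through the three-dimensional construction. The only thing worth emphasizing in the write-up is that Theorem~\ref{t1} provides the Latin H.D.\ for $G_{n,3}$ unconditionally (it does not depend on $n$ or on $Q_{2n}$), so the only hypothesis we actually need is the existence of a source cycle for $Q_{2n}$.
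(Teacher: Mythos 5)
Your proposal is correct and follows exactly the route the paper intends: Corollary~\ref{c4} is stated as an immediate consequence of Theorem~\ref{t3}, with Theorem~\ref{t1} supplying the Latin H.D.\ of $G_{n,3}$ and the hypothesis supplying the Latin H.D.\ of $Q_{2n}$. Nothing to add.
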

Corollaries \ref{c3} and \ref{c4} give us the main result of this paper:
\begin{corollary} \label{c5}
We have a source cycle for all $Q_{2n}$ with $n = 2^a 3^b$.
\end{corollary}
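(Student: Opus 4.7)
The plan is a simple induction on $a+b$, using Corollary~\ref{c3} to absorb one factor of $2$ at a time and Corollary~\ref{c4} to absorb one factor of $3$ at a time. Since both corollaries are already established, the proof reduces to checking a base case and threading the two inductive arms together.

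First I would dispatch the base case $a = b = 0$, which asks for a source cycle in $Q_2 = G_{1,1} = C_4$. Because $C_4$ is already a single Hamilton cycle, the H.D.\ consists of just this one cycle $H$, and the $1 \times 1$ matrix $[1]$ (whose only row is the identity permutation on $[1,1]$) is trivially a Latin family with $H_1 = \sigma_1(H) = H$. So the base case is essentially vacuous, but it does need to be stated explicitly to anchor the induction.

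For the inductive step, I would assume that $Q_{2n'}$ admits a source cycle for every $n' = 2^{a'} 3^{b'}$ with $a' + b' < a + b$, and then split on the parity of the exponents. If $a \ge 1$, write $n = 2 n'$ with $n' = 2^{a-1} 3^{b}$; then $Q_{2n} = Q_{4n'}$, the inductive hypothesis supplies a source cycle for $Q_{2n'}$, and Corollary~\ref{c3} promotes it to a source cycle for $Q_{4n'} = Q_{2n}$. If instead $b \ge 1$, write $n = 3 n'$ with $n' = 2^a 3^{b-1}$; then $Q_{2n} = Q_{6n'}$, and Corollary~\ref{c4} applied to the inductively given source cycle for $Q_{2n'}$ yields the desired source cycle.

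Because Corollaries~\ref{c3} and~\ref{c4} do all of the structural work, no real obstacle remains at this stage; the proof is pure bookkeeping on the exponents $(a,b)$. The only point that deserves a sentence of justification is that the base case genuinely meets the Latin H.D.\ definition, which it does automatically since a $1 \times 1$ Latin square is trivially a Latin family.
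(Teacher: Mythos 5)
Your proposal is correct and matches the paper's (implicit) argument: the paper simply asserts that Corollaries~\ref{c3} and~\ref{c4} yield the result, which is precisely the induction on $a+b$ you spell out, anchored at the trivial base case $Q_2 = C_4$ whose single Hamilton cycle with the $1\times 1$ Latin square $[1]$ is a source pair. Your version is just a more explicit write-up of the same bookkeeping.
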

For future research, we conjecture the following.
\begin{conjecture} \label{conj1}
We have a source cycle for all $Q_{2n}$.
\end{conjecture}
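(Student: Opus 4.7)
The plan is to extend the building blocks of Sections~3 and~4 beyond the primes $2$ and $3$ by constructing Latin Hamilton decompositions of $G_{n,p}$ for every prime $p$. Once this is achieved, the dimension-reduction recipes that power Corollaries~\ref{c3} and~\ref{c4} generalize routinely: a source cycle for $Q_{2n}$ combined with a source cycle for $G_{n,p}$ yields a source cycle for $Q_{2np}$. Since every positive integer $n$ factors as a product of primes, iterating this would give a source cycle for every $Q_{2n}$, establishing Conjecture~\ref{conj1}.

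First I would generalize Lemma~\ref{l2} to $G_{n,p}$ as follows. Partition the edges of $G_{n,p}$ into $p$ classes $X_1, \ldots, X_p$ using the cyclic rule that a dimension-$i$ edge leaving $(x_1, \ldots, x_p)$ is placed in $X_{(i-1) \bmod p}$ whenever $\sum_j x_j \equiv -1 \pmod{4^n}$, and in $X_i$ otherwise. A direct counting argument along the lines of Lemma~\ref{l2} should show that each class decomposes into $4^{n(p-2)}$ disjoint directed cycles of length $4^{2n}$, with the initial components indexed by slices of the grid.

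Next I would develop a $p$-dimensional analogue of the type-I/type-II cube pair of Section~4: at any origin $v$ with $\sum_j v_j \equiv -1 \pmod{4^n}$, a canonical $p$-cube admits a local cyclic relabelling of its edge classes. To make the resulting H.D. Latin, I would seek a merging set $S$ invariant under the cyclic shift $(x_1, \ldots, x_p) \mapsto (x_p, x_1, \ldots, x_{p-1})$, mirroring the symmetry trick of Theorem~\ref{t1}. Provided every merge reduces $c_{X_i}$ by one for each $i$, the cyclic group of order $p$ would act as the Latin family and send $X_1$ to each $X_i$, yielding a source cycle for $G_{n,p}$. A parallel generalization of Theorem~\ref{t3} would then deliver source cycles for $Q_{2np}$.

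The main obstacle — and the reason Conjecture~\ref{conj1} remains open — is the merge step for $p \ge 5$. In the 3-dimensional case, Observation~\ref{obs3} and Lemma~\ref{l3} succeed because a single merge only touches two adjacent slices, and the necklace-order on each is preserved. For general $p$, a single merge simultaneously perturbs $p$ edge classes across $p-1$ overlapping slices, and the number of merges required grows like $4^{n(p-1)}/(p-1)$. Guaranteeing that each merge still reduces every component count while preserving the cyclic-shift invariance of $S$ throughout the entire sequence appears to demand a genuinely new combinatorial invariant, or possibly a different construction altogether — for instance, a group-theoretic or Cayley-graph description of a source cycle on $Q_{2p}$ that sidesteps iterated merging. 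I expect that a proof of the full conjecture will combine both perspectives.
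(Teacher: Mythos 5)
The statement you are trying to prove is Conjecture~\ref{conj1}; the paper offers no proof of it and explicitly leaves it open, establishing only the case $n = 2^a 3^b$ (Corollary~\ref{c5}). What you have written is a research programme rather than a proof, and, as you yourself concede in your final paragraph, it has a genuine gap at the merge step. Let me make that gap concrete, because it is worse than a mere bookkeeping difficulty. In $G_{n,3}$ each of the three edge classes initially splits into $c_X = 4^n$ components, indexed by a \emph{single} coordinate (the slices $Z^n_i$), so $4^n-1$ merges suffice and a merging set --- points on the hyperplane $\sum x_j \equiv -1$ that are pairwise distinct in every coordinate --- can have the required size $4^n-1$. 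In your generalization to $G_{n,p}$ with cycles of length $4^{2n}$, each class splits into $4^{n(p-2)}$ components, indexed by $(p-2)$-tuples of coordinates, so you need $4^{n(p-2)}-1$ merge gadgets. But any family of points in $[0,4^n-1]^p$ that is pairwise distinct in each coordinate has at most $4^n$ members, and $4^{n(p-2)}-1 > 4^n$ already for $p=5$. Hence the direct analogue of a merging set cannot exist; you would have to allow gadgets sharing coordinates, at which point the disjointness of the cubes and the necklace-order argument of Observation~\ref{obs3} and Lemma~\ref{l3} (which relies on each merge touching only two adjacent slices, identified by one coordinate) both collapse. So the inductive engine that drives Lemma~\ref{l4} and Theorem~\ref{t1} does not transfer, and a genuinely new merging or construction scheme is needed --- which is precisely why the authors state this as a conjecture.

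As a secondary point, your component count per merge is off: you would need $4^{n(p-2)}-1$ merges per class, not on the order of $4^{n(p-1)}/(p-1)$; and you should verify, not merely assert, that your cyclic edge-colouring rule actually produces cycles of length $4^{2n}$ confined to two-dimensional slices for $p \ge 5$ (the walk alternates between one dominant dimension and an exceptional one only on the hyperplane $\sum x_j \equiv -1$, and for $p\ge 5$ the exceptional dimension assignment $i \mapsto (i-1) \bmod p$ must be checked to close up after exactly $4^{2n}$ steps). None of this invalidates the overall strategy --- reducing Conjecture~\ref{conj1} to Latin Hamilton decompositions of $G_{n,p}$ for prime $p$ is exactly the natural continuation of the paper --- but the central combinatorial construction is missing, so this cannot be accepted as a proof.
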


\section*{Acknowledgements}
We thank Negin Karisani for her useful comments during this project.

\bibliographystyle{siam} 
\bibliography{Hamilton_Submit}

\newpage

\appendix

\section{Lemma~\ref{l2} for $n=1$} \label{ap1}

\begin{figure}[H]
\centering
\begin{subfigure}{\textwidth}
  \centering

\scalebox{.8}{
\begin{tikzpicture}

\draw[->,ultra thick] (-2,4)--(-1,4) node[right]{$1$};
\draw[->,ultra thick] (-2,4)--(-2,3) node[below]{$2$};
\draw[->,ultra thick] (-2,4)--(-1.5,3.5) node[below]{$3$};


\foreach \i in {0,1,2,3}{
	\foreach \j in {0,1,2,3}{
		        \node[vertex, fill, minimum size = 3pt] at (3 * \i, 3 *\j) {};
		        \node[vertex, fill, minimum size = 3pt] at (3 * \i+.5, 3 *\j - .5) {};
      		        \node[vertex, fill, minimum size = 3pt] at (3 * \i+1, 3 *\j - 1) {};
      		        \node[vertex, fill, minimum size = 3pt] at (3 * \i+1.5, 3 *\j - 1.5) {};      		        
	}
}
\node[vertex, fill, minimum size = 6pt] at (0, 9) {};

\draw[solid,ultra thick] (0,9)--(9,9)--(9,6).. controls (4.5,6.2) .. (0, 6)--(6,6)--(6,3)--(9,3) .. controls (4.5,3.2) .. (0, 3)--(3,3)--(3,0)--(9,0) .. controls (4.5,.2) .. (0, 0).. controls (-.2,4.5) .. (0, 9);
\draw[solid,ultra thick] (.5,8.5)--(6.5,8.5)--(6.5,5.5)--(9.5,5.5) .. controls (5,5.7) .. (.5,5.5)--(3.5,5.5)--(3.5,2.5)--(9.5,2.5) .. controls (5,2.7) .. (.5,2.5)-- (.5, -.5)--(9.5, -.5) .. controls (9.3,4) .. (9.5,8.5) .. controls (5,8.7) .. (.5,8.5);
\draw[solid,ultra thick] (1,8)--(4,8)--(4,5)--(10,5) .. controls (5.5,5.2) .. (1,5) -- (1,2) --(10,2) --(10,-1) .. controls (5.5,-.8) .. (1,-1) --(7,-1) .. controls (6.8,3.5) .. (7,8) -- (10,8) .. controls (5.5,8.2) .. (1,8);
\draw[solid,ultra thick](1.5,7.5) -- (1.5,4.5) --(10.5,4.5) --(10.5,1.5) .. controls (6,1.7) .. (1.5,1.5) --(7.5,1.5) -- (7.5,-1.5) -- (10.5,-1.5) .. controls (6,-1.3) .. (1.5,-1.5) --(4.5,-1.5) .. controls (4.3,3) .. (4.5,7.5)--(10.5,7.5) .. controls (6,7.7) .. (1.5,7.5);
\end{tikzpicture}
}
  \subcaption{X \label{X}}
\end{subfigure}
\newline
\begin{subfigure}{.4 \textwidth}
  \centering

\scalebox{.7}{
\begin{tikzpicture}



\foreach \i in {0,1,2,3}{
	\foreach \j in {0,1,2,3}{
		        \node[vertex, fill, minimum size = 3pt] at (3 * \i, 3 *\j) {};
		        \node[vertex, fill, minimum size = 3pt] at (3 * \i+.5, 3 *\j - .5) {};
      		        \node[vertex, fill, minimum size = 3pt] at (3 * \i+1, 3 *\j - 1) {};
      		        \node[vertex, fill, minimum size = 3pt] at (3 * \i+1.5, 3 *\j - 1.5) {};      		        
	}
}
\node[vertex, fill, minimum size = 6pt] at (0, 9) {};

\draw[dashed,ultra thick] (0,9)--(0,0)--(.5,-.5) .. controls (.7,4) .. (.5, 8.5)--(.5, 2.5)--(1, 2)--(1,-1) .. controls (1.2,3.5) .. (1, 8)--(1,5)--(1.5,4.5)--(1.5,-1.5) .. controls (1.7,3) .. (1.5, 7.5) .. controls (1,8.5) .. (0, 9);
\draw[dashed,ultra thick] (3,9)--(3,3)--(3.5,2.5)--(3.5,-.5) .. controls (3.7,4) .. (3.5,8.5)--(3.5,5.5)--(4,5)--(4,-1) .. controls (4.2,3.5) .. (4,8) -- (4.5, 7.5)--(4.5, -1.5) .. controls (4,-.5) .. (3,0) .. controls (3.2,4.5) .. (3,9);
\draw[dashed,ultra thick] (6,9)--(6,6)--(6.5,5.5)--(6.5,-.5) .. controls (6.7,4) .. (6.5,8.5) -- (7,8) --(7,-1) --(7.5,-1.5) .. controls (7.7,3) .. (7.5,7.5) --(7.5,1.5) .. controls (7,2.5) .. (6,3) -- (6,0) .. controls (6.2,4.5) .. (6,9);
\draw[dashed,ultra thick](9,9) -- (9.5,8.5) --(9.5,-.5) --(10,-1) .. controls (10.2,3.5) .. (10,8) -- (10,2) -- (10.5,1.5) -- (10.5,-1.5) .. controls (10.7,3) .. (10.5,7.5)--(10.5,4.5) .. controls (10,5.5) .. (9,6)--(9,0) .. controls (9.2,4.5) .. (9,9);
\end{tikzpicture}
}

  \subcaption{Y \label{Y}}
\end{subfigure}
\hfill
\begin{subfigure}{.4 \textwidth}
  \centering

\scalebox{.7}{
\begin{tikzpicture}



\foreach \i in {0,1,2,3}{
	\foreach \j in {0,1,2,3}{
		        \node[vertex, fill, minimum size = 3pt] at (3 * \i, 3 *\j) {};
		        \node[vertex, fill, minimum size = 3pt] at (3 * \i+.5, 3 *\j - .5) {};
      		        \node[vertex, fill, minimum size = 3pt] at (3 * \i+1, 3 *\j - 1) {};
      		        \node[vertex, fill, minimum size = 3pt] at (3 * \i+1.5, 3 *\j - 1.5) {};      		        
	}
}
\node[vertex, fill, minimum size = 6pt] at (0, 9) {};

\draw[densely dotted,thick] (0,9)--(1.5,7.5)--(4.5,7.5) .. controls (4,8.5) .. (3, 9)--(4, 8)--(7,8)--(7.5,7.5) .. controls (7,8.5) .. (6, 9)--(6.5,8.5)--(9.5,8.5)--(10.5,7.5) .. controls (10,8.5) .. (9, 9) .. controls (4.5,9.2) .. (0, 9);
\draw[densely dotted,thick] (0,6)--(1,5)--(4,5)--(4.5,4.5) .. controls (4,5.5) .. (3,6)--(3.5,5.5)--(6.5,5.5)--(7.5,4.5) .. controls (7,5.5) .. (6,6) -- (9, 6)--(10.5, 4.5) .. controls (6,4.7) .. (1.5,4.5) .. controls (1,5.5) .. (0,6);
\draw[densely dotted,thick] (0,3)--(.5,2.5)--(3.5,2.5)--(4.5,1.5) .. controls (4,2.5) .. (3,3) -- (6,3) --(7.5,1.5) --(10.5,1.5) .. controls (10,2.5) .. (9,3) --(10,2) .. controls (5.5,2.2) .. (1,2) -- (1.5,1.5) .. controls (1,2.5) .. (0,3);
\draw[densely dotted,thick](0,0) -- (3,0) --(4.5,-1.5) --(7.5,-1.5) .. controls (7,-.5) .. (6,0) -- (7,-1) -- (10,-1) -- (10.5,-1.5) .. controls (10,-.5) .. (9,0)--(9.5,-.5) .. controls (5,-.3) .. (.5,-.5)--(1.5,-1.5) .. controls (1,-.5) .. (0,0);
\end{tikzpicture}
}

  \subcaption{Z \label{Z}}
\end{subfigure}
\caption{ \footnotesize The decomposition discussed in Lemma~\ref{l2} for $n=1$.}\label{pa1}

\end{figure}
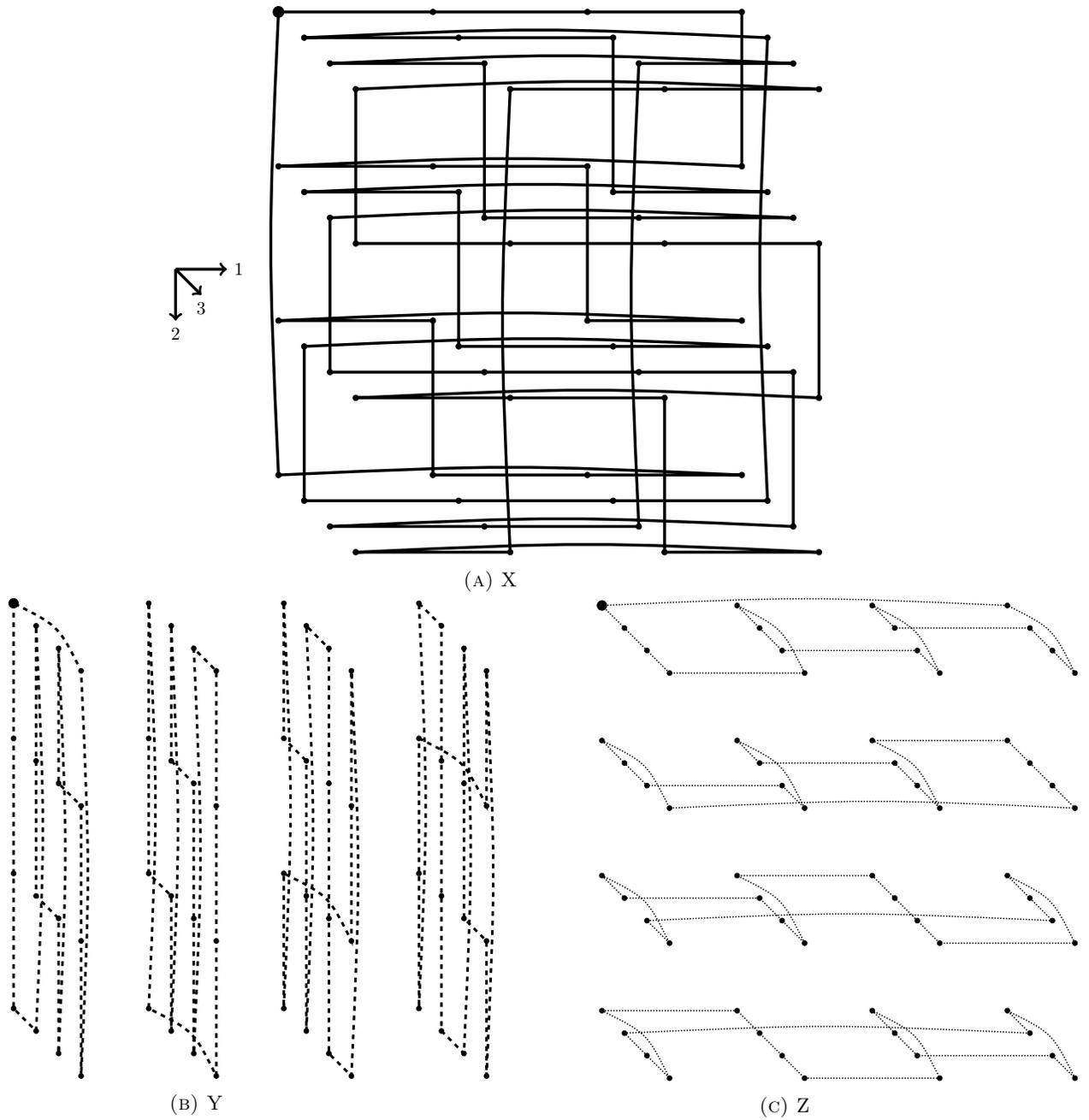

\newpage

\section{Algorithms} \label{ap2}
\subsection{An H.D. for $Q_{4n}$}\hspace*{\fill}  \label{b1}

{\bf Input:}
\begin{itemize}
\item An $n \times 4^{n}$ array $e$ with its {\it $i$th} row showing the {\it $i$th} Hamilton cycle of $Q_{2n}$.
\item A $2 \times 4^{2n}$ array $h$ with its {\it $i$th} row showing the {\it $i$th} Hamilton cycle of $G_{n,2}$.
\end{itemize}
{\bf Output:}
\begin{itemize}
\item A $2n \times 4^{2n}$ array $f$ with its {\it $i$th} row showing the {\it $i$th} Hamilton cycle of $Q_{4n}$.
\end{itemize}

\begin{algorithm}[!htb]
\caption{An H.D. for $Q_{4n}$ from an H.D. for $Q_{2n}$ and an H.D. for $G_{n,2}$}
\label{a1}
\begin{algorithmic}[1]
\For {$i \gets 0 \text{ to } 1$}
    \For {$j \gets 0 \text{ to } 1$}
        \State $c[i][j] \gets 0$ \Comment{initializing the $x$- and $y$-coordinates of the two pointers}
    \EndFor
\EndFor
\For {$j \gets 0 \text{ to } n-1$} \Comment{cycling through $E_1$ to $E_n$}
    \For {$i \gets 0 \text{ to } 4^{2n}-1$} \Comment{cycling through edges of $H_1$ and $H_2$}
        \For {$k \gets 0 \text{ to } 1$} \Comment{cycling through $H_1$ and $H_2$}
            \State $\text{dir} \gets h[k][i][1]$ \Comment{direction of the current edge in $H_{k+1}$} \label{a1l9}
            \State $\text{dim} \gets h[k][i][0] - 1$ \Comment{dimension of the current edge in $H_{k+1}$} \label{a1l10}
            \If {$\text{dir} = 0$ } \Comment{if the current edge in $H_{k+1}$ is forward}
                \State $ f[j+kn][i][0] \leftarrow e[j][c[k][\text{dim}]][0]+n(\text{dim})$ \Comment{\parbox[t]{4.2cm}{dimension of the current edge in $F_{j+1+kn}$}} \label{a1l12}
                \State $ f[j+kn][i][1] \leftarrow e[j][c[k][\text{dim}]][1]$ \Comment{\parbox[t]{4.2cm}{direction of the current edge in $F_{j+1+kn}$}} \label{a1l13}
                \State $ c[k][\text{dim}] \gets c[k][\text{dim}]+1$ \Comment{moving forward in the current copy of $E_{j+1}$}
                \If {$c[k][\text{dim}] = 4^n$ } \Comment{mod operations}
                    \State $c[k][\text{dim}] \leftarrow 0$
                \EndIf
            \Else \Comment{if the current edge in $H_{k+1}$ is backward}
                \State $ c[k][\text{dim}] \gets c[k][\text{dim}]-1$ \Comment{moving backward in the current copy of $E_{j+1}$}
                \If {$c[k][\text{dim}] = -1$ } \Comment{mod operations}
                    \State $c[k][\text{dim}] \leftarrow 4^n-1$
                \EndIf
                \State $ f[j+kn][i][0] \leftarrow e[j][c[k][\text{dim}]][0]+n(\text{dim})$ \Comment{\parbox[t]{4.2cm}{dimension of the current edge in $F_{j+1+kn}$}} \label{a1l23}
                \State $ f[j+kn][i][1] \leftarrow 1 - e[j][c[k][\text{dim}]][1]$ \Comment{\parbox[t]{4.2cm}{direction of the current edge in $F_{j+1+kn}$}} \label{a1l24}
            \EndIf
        \EndFor
    \EndFor
\EndFor
\end{algorithmic}
\end{algorithm}

\newpage
\subsection{An H.D. for $G_{n,3}$}\hspace*{\fill}  \label{b2}

{\bf Input:}
\begin{itemize}
\item A $4^{n} \times 2$ array $S$ having the merging set $S$ in its first $4^n-1$ rows. \\
The elements of $S$ are sorted by their $x$-coordinates, with the {\it$i$th} row of $S$ having the element with $x$-cooridnate $i$. The first entry gives the $y$-coordinate and the second gives the $z$-coordinate.
\end{itemize}
{\bf Output:}
\begin{itemize}
\item A $4^{3n} \times 2$ array $H$ having the edges of $X$.
\end{itemize}

\begin{algorithm}[!htb]
\caption{\small{An algorithm for finding $X$.}}
\label{a2}
\begin{algorithmic}[1]
\State $x \gets 0$ \Comment{initializing the pointer's $x$-coordinate}
\State $y \gets 0$ \Comment{initializing the pointer's $y$-coordinate}
\State $z \gets 0$ \Comment{initializing the pointer's $z$-coordinate}
\State $c \gets 0$ \Comment{$c = x+y+z$}
\State \text{dir} $\leftarrow$ 0
\State $s[4^n-1][0] \gets -2$ \Comment{no element of $S$ has $x$-coordinate equal to $4^n-1$}
\State $s[4^n-1][1] \gets -2$
\For {$i \gets 0 \text{ to } 4^n-1$} \Comment{creating the helping sets  $S'$, $D$, $D'$, $M$, and $M'$}
	\State $sp[i][0] \gets s[i][0]$ \Comment{creating the {\it$i$th} member of $S'$}
	\State $sp[i][1] \gets s[i][1] + 1$
	\If {$sp[i][1] = 4^n$} \Comment{mod operations}
		\State $sp[i][1] \gets 0$	
	\EndIf
	\State $d[i][0] \gets s[i][0] + 1$ \Comment{creating the {\it$i$th} member of $D$}
	\State $d[i][1] \gets s[i][1]$
	\If {$d[i][0] = 4^n$} \Comment{mod operations}
		\State $d[i][0] \gets 0$	
	\EndIf	
	\State $dp[i][0] \gets d[i][0]$ \Comment{creating the {\it$i$th} member of $D'$}
	\State $dp[i][1] \gets sp[i][1]$
	\State $m[i+1][0] \gets s[i][0]$ \Comment{creating the {\it$i$th} member of $M$}
	\State $m[i+1][1] \gets sp[i][1]$
	\State $mp[i+1][0] \gets d[i][0]$ \Comment{creating the {\it$i$th} member of $M'$}
	\State $mp[i+1][1] \gets sp[i][1]$
\EndFor
\State $m[0][0] \gets -1$ \Comment{no element of $M$ has $x$-coordinate equal to $0$}
\State $m[0][1] \gets -1$
\State $mp[0][0] \gets -1$ \Comment{no element of $M'$ has $x$-coordinate equal to $0$}
\State $mp[0][1] \gets -1$
\algstore{ss32}
\end{algorithmic}
\end{algorithm}
\newpage
\begin{algorithm}
\begin{algorithmic}
\algrestore{ss32}
\For {$i \gets 0 \text{ to } 4^{3n}-1$} \Comment{main loop for building the {\it$i$th} edge}
\If {$s[x][0] = y$ \textbf{and} $s[x][1] = z$} \Comment{$(x,y,z) \in S$}
	\If {$\text{dir} = 0$} \Comment{we have reached $S$ from outside of cube}
		\State $ \text{dir} \gets 1$ \Comment{in the next step we exit from $S'$ in negative direction}
		\State $h[i][0] \gets 3$  \Comment{the {\it$i$th} edge is in dimension 3}
		\State $h[i][1] \gets 0$  \Comment{the {\it$i$th} edge is in positive direction}
		\State $c \gets c+1$ \Comment{adding $1$ to $z$ and $c$}
		\State $z \leftarrow z+1$
		\If {$z = 4^n$}	\Comment{mod operations}
			\State $z \gets 0$
		\EndIf				
	\Else \Comment{we have reached $S$ from $S'$}
		\State $ \text{dir} \gets 1$ 
		\State $h[i][0] \gets 1$ \Comment{the {\it$i$th} edge is in dimension 1}
		\State $h[i][1] \gets 1$ \Comment{the {\it$i$th} edge is in negative direction}
		\State $c \gets c-1$ \Comment{subtracting $1$ from $x$ and $c$}
		\State $x \leftarrow x-1$
		\If {$x = -1$}	\Comment{mod operations}
			\State $x \gets 4^n-1$
		\EndIf				
	\EndIf
\ElsIf {$sp[x][0] = y$ \textbf{and} $sp[x][1] = z$} \Comment{$(x,y,z) \in S'$}
	\If {$\text{dir} = 0$} \Comment{we have reached $S'$ from outside of cube}
		\State $ \text{dir} \gets 1$ \Comment{in the next step we exit from $S$ in negative direction}
		\State $h[i][0] \gets 3$   \Comment{the {\it$i$th} edge is in dimension 3}
		\State $h[i][1] \gets 1$   \Comment{the {\it$i$th} edge is in negatice direction}
		\State $c \gets c-1$ \Comment{subtracting $1$ from $z$ and $c$}
		\State $z \leftarrow z-1$
		\If {$z = -1$}	\Comment{mod operations}
			\State $z \gets 4^n-1$
		\EndIf				
	\Else \Comment{we have reached $S'$ from $S$}
		\State $ \text{dir} \gets 1$
		\State $h[i][0] \gets 2$ \Comment{the {\it$i$th} edge is in dimension 2}
		\State $h[i][1] \gets 1$ \Comment{the {\it$i$th} edge is in negative direction}
		\State $c \gets c-1$ \Comment{subtracting $1$ from $z$ and $c$}
		\State $y \leftarrow y-1$
		\If {$y = -1$}	\Comment{mod operations}
			\State $y \gets 4^n-1$
		\EndIf				
	\EndIf
\algstore{ss32}
\end{algorithmic}
\end{algorithm}
\newpage
\begin{algorithm}
\begin{algorithmic}
\algrestore{ss32}
\ElsIf {$d[x][0] = y$ \textbf{and} $d[x][1] = z$} \Comment{$(x,y,z) \in D$}
	\If {$\text{dir} = 0$} \Comment{we have reached $D$ from $D'$}
		\State $h[i][0] \gets 1$  \Comment{the {\it$i$th} edge is in dimension 1}
		\State $h[i][1] \gets 0$  \Comment{the {\it$i$th} edge is in positive direction}
		\State $c \gets c+1$ \Comment{adding $1$ to $x$ and $c$}
		\State $x \leftarrow x+1$
		\If {$x = 4^n$}	\Comment{mod operations}
			\State $x \gets 0$
		\EndIf				
	\Else \Comment{we have reached $D$ from $V'$}
		\State $h[i][0] \gets 3$ \Comment{the {\it$i$th} edge is in dimension 3}
		\State $h[i][1] \gets 0$ \Comment{the {\it$i$th} edge is in positive direction}
		\State $c \gets c+1$ \Comment{adding $1$ to $x$ and $c$}
		\State $z \leftarrow z+1$
		\If {$z = 4^n$}	\Comment{mod operations}
			\State $z \gets 0$
		\EndIf				
	\EndIf
\ElsIf {$dp[x][0] = y$ \textbf{and} $dp[x][1] = z$} \Comment{$(x,y,z) \in D'$}
	\If {$\text{dir} = 0$} \Comment{we have reached $D'$ from outside of cube}
		\State $h[i][0] \gets 3$   \Comment{the {\it$i$th} edge is in dimension 3}
		\State $h[i][1] \gets 1$   \Comment{the {\it$i$th} edge is in negative direction}
		\State $c \gets c-1$ \Comment{subtracting $1$ from $z$ and $c$}
		\State $z \leftarrow z-1$
		\If {$z = -1$}	\Comment{mod operations}
			\State $z \gets 4^n-1$
		\EndIf				
	\Else \Comment{we have reached $D'$ from $D$}
		\State $h[i][0] \gets 1$ \Comment{the {\it$i$th} edge is in dimension 1}
		\State $h[i][1] \gets 1$ \Comment{the {\it$i$th} edge is in negative direction}
		\State $c \gets c-1$ \Comment{subtracting $1$ from $z$ and $c$}
		\State $x \leftarrow x-1$
		\If {$x = -1$}	\Comment{mod operations}
			\State $x \gets 4^n-1$
		\EndIf				
	\EndIf
\algstore{ss33}
\end{algorithmic}
\end{algorithm}
\newpage
\begin{algorithm}
\begin{algorithmic}
\algrestore{ss33}
\ElsIf {$m[x][0] = y$ \textbf{and} $m[x][1] = z$} \Comment{$(x,y,z) \in M$}
    \If {$\text{dir} = 0$} \Comment{we have reached $M$ from $M'$}
        \State $h[i][0] \gets 1$  \Comment{the {\it$i$th} edge is in dimension 1}
        \State $h[i][1] \gets 0$  \Comment{the {\it$i$th} edge is in positive direction}
        \State $c \gets c+1$ \Comment{adding $1$ to $x$ and $c$}
        \State $x \leftarrow x+1$
        \If {$x = 4^n$} \Comment{mod operations}
            \State $x \gets 0$
        \EndIf              
    \Else \Comment{we have reached $M$ from outside of cube}
        \State $ \text{dir} \gets 0$ \Comment{in the next step we exit from $M'$ in positive direction}
        \State $h[i][0] \gets 2$ \Comment{the {\it$i$th} edge is in dimension 3}
        \State $h[i][1] \gets 0$ \Comment{the {\it$i$th} edge is in positive direction}
        \State $c \gets c+1$ \Comment{adding $1$ to $y$ and $c$}
        \State $y \leftarrow y+1$
        \If {$y = 4^n$} \Comment{mod operations}
            \State $y \gets 0$
        \EndIf              
    \EndIf
\ElsIf {$mp[x][0] = y$ \textbf{and} $mp[x][1] = z$} \Comment{$(x,y,z) \in M'$}
    \If {$\text{dir} = 0$} \Comment{we have reached $M'$ from $M$}
        \State $h[i][0] \gets 1$   \Comment{the {\it$i$th} edge is in dimension 1}
        \State $h[i][1] \gets 0$   \Comment{the {\it$i$th} edge is in positive direction}
        \State $c \gets c+1$ \Comment{adding $1$ to $x$ and $c$}
        \State $x \leftarrow x+1$
        \If {$x = 4^n$}  \Comment{mod operations}
            \State $x \gets 0$
        \EndIf              
    \Else \Comment{we have reached $M'$ from outside of cube}
        \State $ \text{dir} \gets 1$ \Comment{in the next step we exit from $M$ in positive direction}
        \State $h[i][0] \gets 2$ \Comment{the {\it$i$th} edge is in dimension 2}
        \State $h[i][1] \gets 1$ \Comment{the {\it$i$th} edge is in negative direction}
        \State $c \gets c-1$ \Comment{subtracting $1$ from $y$ and $c$}
        \State $y \leftarrow y-1$
        \If {$y = -1$}  \Comment{mod operations}
            \State $y \gets 4^n-1$
        \EndIf              
    \EndIf
    \algstore{ss31}
\end{algorithmic}
\end{algorithm}
\newpage
\begin{algorithm}
\begin{algorithmic}
\algrestore{ss31}
\Else \Comment{normal vertex}
    \If {$\text{dir} = 0$} \Comment{if the current direction is positive}
        \If {$c = -1 \pmod{4^n}$} \Comment{if it is time to move in dimension 2}
            \State $h[i][0] \gets 2$ \Comment{the {\it$i$th} edge is in dimension 2}
            \State $h[i][1] \gets 0$ \Comment{the {\it$i$th} edge is in the positive direction}
            \State $c \gets c+1$ \Comment{adding $1$ to $y$ and $c$}
            \State $y \leftarrow y+1$
            \If {$y = 4^n$} \Comment{mod operations}
                \State $y \gets 0$
            \EndIf      
        \Else \Comment{if it is time to move in dimension 1}
            \State $h[i][0] \gets 1$  \Comment{the {\it$i$th} edge is in dimension 1}
            \State $h[i][1] \gets 0$  \Comment{the {\it$i$th} edge is in positive direction}
            \State $c \gets c+1$  \Comment{adding $1$ to $x$ and $c$}
            \State $x \leftarrow x+1$
            \If {$x = 4^n$} \Comment{mod operations}
                \State $x \gets 0$
            \EndIf      
        \EndIf
    \Else \Comment{if the current direction is negative}
        \If {$c = 0 \pmod {4^n}$} \Comment{if it is time to move in dimension 2}
            \State $h[i][0] \gets 2$ \Comment{the {\it$i$th} edge is in dimension 2}
            \State $h[i][1] \gets 1$ \Comment{the {\it$i$th} edge is in negative direction}
            \State $c \gets c-1$  \Comment{subtracting $1$ from $y$ and $c$}
            \State $y \leftarrow y-1$
            \If {$y = -1$}  \Comment{mod operations}
                \State $y \gets 4^n-1$
            \EndIf      
        \Else \Comment{if it is time to move in dimension 1}
            \State $h[i][0] \gets 1$ \Comment{the {\it$i$th} edge is in dimension 1}
            \State $h[i][1] \gets 1$ \Comment{the {\it$i$th} edge is in negative direction}
            \State $c \gets c-1$ \Comment{subtracting $1$ from $x$ and $c$}
            \State $x \leftarrow x-1$
            \If {$x = -1$}  \Comment{mod operations}
                \State $x \gets 4^n-1$
            \EndIf      
        \EndIf      
    \EndIf
\EndIf
\EndFor
\end{algorithmic}
\end{algorithm}

\newpage
\subsection{An H.D. for $Q_{6n}$}\hspace*{\fill} \label{b3}

\textbf{Input:}
\begin{itemize}
\item An $n \times 4^{n}$ array $e$ with its {\it $i$th} row showing the {\it $i$th} Hamilton cycle for $Q_{2n}$.
\item A $3 \times 4^{3n}$ array $h$ with its {\it $i$th} row showing the {\it $i$th} Hamilton cycle for $G_{n,3}$.
\end{itemize}
\textbf{Output:}
\begin{itemize}
\item A $3n \times 4^{3n}$ array $g$ with its {\it $i$th} row showing the {\it $i$th} Hamilton cycle for $Q_{6n}$.
\end{itemize}

\begin{algorithm}[!htb]
\caption{An H.D. for $Q_{6n}$ from an H.D. for $Q_{2n}$ and an H.D. for $G_{n,3}$}
\label{a3}
\begin{algorithmic}[1]
\For {$i \gets 0 \text{ to } 2$}
    \For {$j \gets 0 \text{ to } 2$}
        \State $c[i][j] \gets 0$ \Comment{initializing the $x$-, $y$-, and $z$-coordinates of the three pointers}
    \EndFor
\EndFor
\For {$j \gets 0 \text{ to } n-1$} \Comment{cycling through $E_1$ to $E_n$}
    \For {$i \gets 0 \text{ to } 4^{3n}-1$} \Comment{cycling through edges of $H_1$, $H_2$, and $H_3$}
        \For {$k \gets 0 \text{ to } 2$} \Comment{cycling through $H_1$, $H_2$, and $H_3$}
            \State $\text{dir} \gets h[k][i][1]$ \Comment{direction of the current edge in $H_{k+1}$}
            \State $\text{dim} \gets h[k][i][0] - 1$ \Comment{dimension of the current edge in $H_{k+1}$}
            \If {$\text{dir} = 0$ } \Comment{if the current edge in $H_{k+1}$ is forward}
                \State $ f[j+kn][i][0] \leftarrow e[j][c[k][\text{dim}]][0]+n(\text{dim})$ \Comment{\parbox[t]{4.2cm}{dimension of the current edge in $F_{j+1+kn}$}}
                \State $ f[j+kn][i][1] \leftarrow e[j][c[k][\text{dim}]][1]$ \Comment{\parbox[t]{4.2cm}{direction of the current edge in $F_{j+1+kn}$}}
                \State $ c[k][\text{dim}] \gets c[k][\text{dim}]+1$ \Comment{moving forward in the current copy of $E_{j+1}$}
                \If {$c[k][\text{dim}] = 4^n$ } \Comment{mod operations}
                    \State $c[k][\text{dim}] \leftarrow 0$
                \EndIf
            \Else \Comment{if the current edge in $H_{k+1}$ is backward}
                \State $ c[k][\text{dim}] \gets c[k][\text{dim}]-1$ \Comment{moving backward in the current copy of $E_{j+1}$}
                \If {$c[k][\text{dim}] = -1$ } \Comment{mod operations}
                    \State $c[k][\text{dim}] \leftarrow 4^n-1$
                \EndIf
                \State $ f[j+kn][i][0] \leftarrow e[j][c[k][\text{dim}]][0]+n(\text{dim})$ \Comment{\parbox[t]{4.2cm}{dimension of the current edge in $F_{j+1+kn}$}}
                \State $ f[j+kn][i][1] \leftarrow 1 - e[j][c[k][\text{dim}]][1]$ \Comment{\parbox[t]{4.2cm}{direction of the current edge in $F_{j+1+kn}$}}
            \EndIf
        \EndFor
    \EndFor
\EndFor
\end{algorithmic}
\end{algorithm}

\newpage
\subsection{A source cycle for $Q_{4n}$}\hspace*{\fill} \label{b4}

{\bf Input:}
\begin{itemize}
\item A $4^{n} \times 2$ array $e$ having the source cycle for $Q_{2n}$.
\item An $n$ by $n$ source matrix $A$ for the cycle $E$.
\item A $4^{2n} \times 2$ array $h$ having the source cycle for $G_{n,2}$.
\end{itemize}
{\bf Output:}
\begin{itemize}
\item A $4^{2n} \times 2$ array $f$ having the source cycle for $Q_{4n}$.
\item A $2n$ by $2n$ matrix $P$ as the accompanying source matrix.
\end{itemize}

\begin{algorithm}[!htb]
\caption{A Source Cycle for $Q_{4n}$ From Source Cycles for $Q_{2n}$ and $G_{n,2}$}
\label{a4}
\begin{algorithmic}[1]
\For {$i \gets 0 \text{ to } n-1$} \Comment{building $P$}
    \For {$j \gets 0 \text{ to } n-1$}
    	\For {$k \gets 0 \text{ to } 1$}
	    \For {$t \gets 0 \text{ to } 1$}
	        \State $z \gets (k+t) \pmod{2}$ 
	        \State $p[i+k][j+tn] \gets a[i][j] + zn$ 
	\EndFor
        \EndFor
    \EndFor
\EndFor
\For {$i \gets 0 \text{ to } 1$} \Comment{initializing the $x$- and $y$-coordinates of the pointer}
        \State $c[i] \gets 0$ 
\EndFor
    \For {$i \gets 0 \text{ to } 4^{2n}-1$} \Comment{cycling through edges of $H$}
            \State $\text{dir} \gets h[i][1]$ \Comment{direction of the current edge in $H$}
            \State $\text{dim} \gets h[i][0] - 1$ \Comment{dimension of the current edge in $H$}
            \If {$\text{dir} = 0$ } \Comment{if the current edge in $H$ is forward}
                \State $ f[i][0] \leftarrow e[c[\text{dim}]][0]+n(\text{dim})$ \Comment{dimension of the current edge in $F$}
                \State $ f[i][1] \leftarrow e[c[\text{dim}]][1]$ \Comment{direction of the current edge in $F$}
                \State $ c[\text{dim}] \gets c[\text{dim}]+1$ \Comment{moving forward in the current copy of $E$}
                \If {$c[\text{dim}] = 4^n$ } \Comment{mod operations}
                    \State $c[\text{dim}] \leftarrow 0$
                \EndIf
            \Else \Comment{if the current edge in $H$ is backward}
                \State $ c[\text{dim}] \gets c[\text{dim}]-1$ \Comment{moving backward in the current copy of $E$}
                \If {$c[\text{dim}] = -1$ } \Comment{mod operations}
                    \State $c[\text{dim}] \leftarrow 4^n-1$
                \EndIf
                \State $ f[i][0] \leftarrow e[c[\text{dim}]][0]+n(\text{dim})$ \Comment{dimension of the current edge in $F$}
                \State $ f[i][1] \leftarrow 1 - e[c[\text{dim}]][1]$ \Comment{direction of the current edge in $F$}
            \EndIf
    \EndFor
\end{algorithmic}
\end{algorithm}

\newpage
\subsection{A source cycle for $Q_{6n}$}\hspace*{\fill} \label{b5}

{\bf Input:}
\begin{itemize}
\item A $4^{n} \times 2$ array $e$ having the source cycle for $Q_{2n}$.
\item An $n$ by $n$ source matrix $A$ for the cycle $E$.
\item A $4^{2n} \times 2$ array $h$ having the source cycle for $G_{n,2}$.
\end{itemize}
{\bf Output:}
\begin{itemize}
\item A $4^{3n} \times 2$ array $f$ having the source cycle for $Q_{6n}$.
\item A $3n$ by $3n$ matrix $P$ as the accompanying source matrix.
\end{itemize}

\begin{algorithm}[!htb]
\caption{A Source Cycle for $Q_{6n}$ From Source Cycles for $Q_{2n}$ and $G_{n,3}$} 
\label{a5}
\begin{algorithmic}[1]
\For {$i \gets 0 \text{ to } n-1$} \Comment{building $P$}
    \For {$j \gets 0 \text{ to } n-1$}
    	\For {$k \gets 0 \text{ to } 2$}
	    \For {$t \gets 0 \text{ to } 2$}
	        \State $z \gets (k+t) \pmod{3}$ 
	        \State $p[i+k][j+tn] \gets a[i][j] + zn$ 
	\EndFor
        \EndFor
    \EndFor
\EndFor
\For {$i \gets 0 \text{ to } 2$} \Comment{initializing the $x$-, $y$-, and $z$-coordinates of the pointer}
        \State $c[i] \gets 0$ 
\EndFor
    \For {$i \gets 0 \text{ to } 4^{3n}-1$} \Comment{cycling through edges of $H$}
            \State $\text{dir} \gets h[i][1]$ \Comment{direction of the current edge in $H$}
            \State $\text{dim} \gets h[i][0] - 1$ \Comment{dimension of the current edge in $H$}
            \If {$\text{dir} = 0$ } \Comment{if the current edge in $H$ is forward}
                \State $ f[i][0] \leftarrow e[c[\text{dim}]][0]+n(\text{dim})$ \Comment{dimension of the current edge in $F$}
                \State $ f[i][1] \leftarrow e[c[\text{dim}]][1]$ \Comment{direction of the current edge in $F$}
                \State $ c[\text{dim}] \gets c[\text{dim}]+1$ \Comment{moving forward in the current copy of $E$}
                \If {$c[\text{dim}] = 4^n$ } \Comment{mod operations}
                    \State $c[\text{dim}] \leftarrow 0$
                \EndIf
            \Else \Comment{if the current edge in $H$ is backward}
                \State $ c[\text{dim}] \gets c[\text{dim}]-1$ \Comment{moving backward in the current copy of $E$}
                \If {$c[\text{dim}] = -1$ } \Comment{mod operations}
                    \State $c[\text{dim}] \leftarrow 4^n-1$
                \EndIf
                \State $ f[i][0] \leftarrow e[c[\text{dim}]][0]+n(\text{dim})$ \Comment{dimension of the current edge in $F$}
                \State $ f[i][1] \leftarrow 1 - e[c[\text{dim}]][1]$ \Comment{direction of the current edge in $F$}
            \EndIf
    \EndFor
\end{algorithmic}
\end{algorithm}

\newpage

\section{Correctness of Algorithm \ref{a1}} \label{ap3}

For a fixed $j$, in the $i$-loop, Algorithm \ref{a1} outputs two cycles $F_{j+1} = f(E_{j+1}, H_1)$ and $F_{j+1+n} = f(E_{j+1}, H_2)$.
It does so by traversing the edges of $H_1$ ($k=0$) and $H_2$ ($k=1$) and mimicking them:
\begin{itemize}
    \item If the {\it$i$th} edge of $H_1$ is $1$ or $\overline{1}$, then the {\it$i$th} edge of $F_{j+1}$ is one of $\left\{ 1, \overline{1}, 2, \overline{2}, \ldots, n, \overline{n} \right\}$, and if the {\it$i$th} edge of $H_1$ is $2$ or $\overline{2}$, then the {\it$i$th} edge of $F_{j+1}$ is one of
    \\$\left\{ n+1, \overline{n+1}, n+2, \overline{n+2}, \ldots, 2n, \overline{2n} \right\}$.
    Same thing is true for $H_2$ and $F_{j+1+n}$.
    \item A pointer, with its $x$- and $y$-coordinates being $c[0][0]$ and $c[0][1]$, tracks movement through $H_1$ on the 2-dimensional grid.
    Another pointer, with its $x$- and $y$-coordinates being $c[1][0]$ and $c[1][1]$, tracks movement through $H_2$ on the 2-dimensional grid.
    These pointers together with $E_{j+1}$ determine in what dimension and direction the {\it$i$th} edges of $F_{j+1}$ $F_{j+1+n}$ are:
    \begin{itemize}
        \item If the {\it$i$th} edge of $H_1$ is from $(a,b)$ to $(a+1,b)$, then the {\it$i$th} edge of $F_{j+1}$ has the same direction and dimension as the {\it$(a+1)$st} edge of $E_{j+1}$.
        \item If the {\it$i$th} edge of $H_1$ is from $(a,b)$ to $(a,b+1)$, then the {\it$i$th} edge of $F_{j+1}$ has direction equal to that of the {\it$(b+1)$st} edge of $E_{j+1}$ and dimension equal to $n$ plus the dimension of the {\it$(b+1)$st} edge of $E_{j+1}$.
        \item If the {\it$i$th} edge of $H_1$ is from $(a,b)$ to $(a-1,b)$, then the {\it$i$th} edge of $F_{j+1}$ has direction opposite to that of the {\it$a$th} edge of $E_{j+1}$ and dimension equal to that of the {\it$a$th} edge of $E_{j+1}$.
        \item If the {\it$i$th} edge of $H_1$ is from $(a,b)$ to $(a,b-1)$, then the {\it$i$th} edge of $F_{j+1}$ has direction opposite to that of the {\it$b$th} edge of $E_{j+1}$ and dimension equal to $n$ plus the dimension of the {\it$b$th} edge of $E_{j+1}$.
        \item Same things can be said about $H_2$ and $E_{j+1}$.
    \end{itemize}
    \item The pointers are initially set to $(0,0)$.
    After each iteration of $j$, the pointers become $(0,0)$ because $H_1$ and $H_2$ start and end at $(0,0)$.
\end{itemize}

\end{document}